\newtheorem{theorem}{Theorem}[section]
\newtheorem{corollary}[theorem]{Corollary}
\newtheorem{lemma}[theorem]{Lemma}
\newtheorem{proposition}[theorem]{Proposition}
\newtheorem{question}{Question}
\newcommand{\imod}[1]{\allowbreak\mkern4mu({\operator@font mod}\,\,#1)}
\newcommand{\Irr}{{\mathrm {Irr}}}
\newcommand{\Aut}{{\mathrm {Aut}}}
\newcommand{\Out}{{\mathrm {Out}}}
\newcommand{\kernel}{{\mathrm {ker}}}
\newcommand{\PSL}{{\mathrm {PSL}}}
\newcommand{\GL}{{\mathrm {GL}}}
\newcommand{\PSU}{{\mathrm {PSU}}}
\newcommand{\SL}{{\mathrm {SL}}}
\newcommand{\PGL}{{\mathrm {PGL}}}
\newcommand{\SSS}{\mathrm{S}}
\newcommand{\A}{\mathrm{A}}
\renewcommand{\bf}{\textbf}
\newcommand{\geqs}{\geqslant}
\theoremstyle{definition}
\newtheorem{problem}{Problem}
\begin{document}
\title[\textbf{Question of Dixon and Rahnamai Barghi}]{\textbf{On a question of Dixon and Rahnamai Barghi}}

\author{Sesuai Y. Madanha}
\address{Sesuai Y. Madanha, Department of Mathematics and Applied Mathematics, University of Pretoria, Private Bag X20, Hatfield, Pretoria 0028, South Africa}
\address{DST-NRF Centre of Excellence in Mathematical and Statistical Sciences (CoE-MaSS)}
\email{sesuai.madanha@up.ac.za}

\thanks{}

\subjclass[2010]{Primary 20C15}

\date{\today}

\keywords{primitive characters, zeros of characters, non-solvable groups}

\begin{abstract}
Let $ G $ be a finite non-solvable group with a primitive irreducible character $ \chi $ that vanishes on one conjugacy class. We show that $ G $ has a homomorphic image that is either almost simple or a Frobenius group. We also classify such groups $ G $ with a composition factor isomorphic to a sporadic group, an alternating group $ \rm{A}_{n} $, $ n\geq 5 $ or $ \PSL_{2}(q) $, where $ q\geq 4 $ is a prime power, when $ \chi $ is faithful. Our results partially answer a question of Dixon and Rahnamai Barghi.
\end{abstract}

\maketitle


\section{Introduction}\label{s:intro}
Let $ \chi $ be a non-linear irreducible character of a finite group $ G $. A well-known theorem of Burnside \cite[Theorem 3.15]{Isa06} shows that $ \chi (g)=0 $ for some $ g\in G $, that is, $ \chi $ vanishes on some element $ g $ of $ G $. Since $ \chi $ is invariant on conjugacy classes, $ \chi $ vanishes on at least one conjugacy class. Malle, Navarro and Olsson \cite{MNO00} generalised Burnside's theorem by showing that $ \chi $ vanishes on some conjugacy class of elements of prime power order.

Many authors have studied finite groups with a non-linear irreducible character $ \chi $ that vanishes on only one conjugacy class. Zhmud' \cite{Zhm79} was the first to study them. Chillag \cite[Corollary 2.4]{Chi99} showed that either $ \chi _{G'} $ is irreducible or $ G $ is a Frobenius group with a complement of order $ 2 $ and an abelian kernel of odd order. Dixon and Rahnamai Barghi \cite[Theorem 9]{DB07} obtained some partial results when $ G $ is solvable and Qian \cite{Qia07} characterised finite solvable groups with this extremal property. Recently, Burness and Tong-Viet \cite{BT-V15} studied the group when $ \chi $ is imprimitive, being induced from an irreducible character of a maximal subgroup of $ G $.  

Dixon and Rahnamai Barghi \cite{DB07} posed some questions at the end of their paper. Among them was the question below:

\begin{question}\label{Q1}
If $ G $ is a finite non-solvable group with an irreducible character that vanishes on one conjugacy class, can $ G $ have more than one non-abelian composition factor?
\end{question}

In this article we shall attempt to answer this question and also to contribute to the classification of finite groups with an irreducible character that vanishes on one conjugacy class. In order to do so we investigate non-solvable groups with a primitive irreducible character that vanishes on a unique conjugacy class. In particular, we shall establish:

\begin{theorem}\label{firsttheorem}
Let $ G $ be a finite non-solvable group. Suppose $ \chi \in \Irr (G) $ is primitive and vanishes on one conjugacy class, $ \mathcal{C} $. Let $ K=\kernel \chi $, $ Z=Z(\chi ) $. Then there exists a normal subgroup $ M $ of $ G $ such that $ \mathcal{C}\subseteq M\setminus Z $ and $ M/Z $ is the unique minimal normal subgroup of the group $ G/Z $. Moreover, one of the following holds: 
\begin{itemize}
\item[(a)] $ G/Z $ is almost simple and $ M/K $ is quasisimple.
\item[(b)] $ G/Z $ is a  Frobenius group with an abelian kernel $ M/Z $ of order $ p^{2n} $, $ M/K $ is an extra-special $ p $-group and $ Z/K $ is of order $ p $.
\end{itemize}
\end{theorem}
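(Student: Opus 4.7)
The plan is to first quotient out by $K$ so that $\chi$ is faithful and $Z=Z(G)$ is cyclic (using the classical fact that $Z(\chi)/K$ is always cyclic). Because $\chi$ is primitive, every abelian normal subgroup of $G$ must lie in $Z$ (Theorem~6.15 of Isaacs); in particular, every minimal normal subgroup of $G/Z$ is either non-abelian, or has a non-abelian preimage in $G$ whose commutator subgroup is contained in $Z$. I would then take $M$ to be the preimage in $G$ of $\mathrm{soc}(G/Z)$ and aim to establish both that $\mathcal{C}\subseteq M\setminus Z$ and that $\mathrm{soc}(G/Z)$ is a \emph{single} minimal normal subgroup.

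The containment $\mathcal{C}\subseteq M$ uses that $\chi$ is non-vanishing on $Z$ by the very definition of $Z(\chi)$, so $\mathcal{C}$ is disjoint from $Z$; combined with Clifford theory applied to $\chi_N$ for any minimal normal preimage $N$, a second minimal normal subgroup not meeting $\mathcal{C}$ would force a second conjugacy class of zeros of $\chi$, contradicting uniqueness of $\mathcal{C}$. The same reasoning yields uniqueness of the minimal normal subgroup of $G/Z$. I would then case-split according to whether $M/Z$ is non-abelian or abelian. In the non-abelian case $M/Z=T_1\times\cdots\times T_k$ with the $T_i$ isomorphic non-abelian simple groups; Clifford analysis of $\chi_M$ against the tensor-product decomposition of characters of $M/Z$ shows that $k\geqs 2$ would create more than one conjugacy class of zeros inside $M$, hence $k=1$. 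Then $M/K$ is quasisimple (a central extension of the simple $M/Z$ by the cyclic $Z/K$), and $\Centralizer_{G/Z}(M/Z)=1$ by uniqueness of $\mathrm{soc}(G/Z)$, so $G/Z\hookrightarrow\Aut(M/Z)$ is almost simple---this is case~(a).

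In the abelian case $M/Z$ is elementary abelian of exponent $p$, $[M,M]\leqs Z$, and $M$ has nilpotency class~$2$ (being non-abelian by primitivity). Minimality of $M/Z$ under $G$ forces $Z(M)=Z$, and a short commutator computation using $\exp(M/Z)=p$ together with class~$2$ shows that $[M,M]$ has exponent $p$; pressing this against the single-class hypothesis pins $Z/K$ down to have order exactly $p$, so that $M/K$ is extra-special of order $p^{2n+1}$ and $|M/Z|=p^{2n}$. Finally, $G/M$ must act fixed-point-freely on $M/Z\cong\F_p^{2n}$: any non-identity element of $G/Z$ fixing a non-trivial subspace would, after closing under the $G$-action, yield a proper non-trivial $G$-invariant subspace of $M/Z$, contradicting minimality. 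This makes $G/Z$ Frobenius with kernel $M/Z$, giving case~(b). The hardest step will be precisely the translation of \emph{``exactly one vanishing class''} into these structural statements---the uniqueness of the minimal normal subgroup of $G/Z$, and the forcing of $|Z/K|=p$ in case~(b)---both of which will require delicate character-value or class-sum computations rather than abstract normal-subgroup bookkeeping.
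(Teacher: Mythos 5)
Your overall architecture---pass to $G/K$, locate $\mathcal{C}$ inside the preimage $M$ of the socle of $G/Z$, prove uniqueness of the minimal normal subgroup, then split on whether $M/Z$ is abelian---matches the paper's. But two of your steps, as described, do not work, and they are precisely the two places where the one-class hypothesis has to do real work.

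First, the Frobenius claim in the abelian case. You assert that $G/M$ acts fixed-point-freely on $M/Z\cong\F_p^{2n}$ because any non-identity element fixing a non-trivial vector would, after closing under the $G$-action, yield a proper non-trivial $G$-invariant subspace. That implication is false: the $G$-closure of a fixed vector can be all of $M/Z$. Irreducibility of the action on $M/Z$ is perfectly compatible with non-trivial fixed points (a transvection in $\SL_2(p)$ acts on $\F_p^{2}$ irreducibly as part of $\SL_2(p)$ yet fixes a nonzero vector). The Frobenius property is genuinely a consequence of ``exactly one vanishing class,'' not of minimality. The paper's argument (Proposition~\ref{GKsolvable}) takes a $p'$-element $\overline{x}$ with $\Centralizer_{\overline{M}}(\overline{x})\neq 1$, uses coprime action to get $[\overline{M},\langle\overline{x}\rangle]<\overline{M}$, exploits that $\chi_M$ is fully ramified over $Z$ (so $\chi$ vanishes on all of $M\setminus Z$, which must sit inside the single class $\mathcal{C}$), and then applies the relative M-group theorem \cite[Theorem 6.22]{Isa06} to force the absurdity $Y=M\cup B$ with $M,B<Y=\langle x\rangle M$. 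Your proposal contains no substitute for this.

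Second, the reduction to $k=1$ in the non-abelian case. The tensor-product decomposition of $\chi_M$ is only available when $Z=1$, where $M=T_1\times\cdots\times T_k$ is an honest direct product (and even there the contradiction is about element orders: $a_1$ and $a_1a_2$ are both zeros but cannot both have the same $p$-power order). When $Z\neq 1$ the factors $T_i$ commute only modulo $Z$, $\chi_M$ does not factor, and this is the hard case: the paper first shows $M$ is perfect, then either every $T_i^{\infty}$ is simple (whence $T_i=T_i^{\infty}\times Z$ and $M$ is not perfect, a contradiction) or some $T_1^{\infty}$ is quasisimple with non-trivial cyclic center of $p$-power order, in which case Blau's theorem (Lemma~\ref{quasisimplecharacterization}) forces the central element $z_1$ to be a commutator, Lemma~\ref{trivialcentralizersfornonzeros} places the commutating elements in $\mathcal{C}$, and multiplying by a $p'$-element of another factor yields $z_1=1$. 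None of this machinery appears in your sketch. Relatedly, you misidentify where the difficulty lies: the uniqueness of the minimal normal subgroup (every minimal normal subgroup of $G/Z$ must contain $\mathcal{C}$ since otherwise its preimage is abelian normal hence central, and two of them would force $\mathcal{C}\subseteq Z$) and the value $|Z/K|=p$ (immediate from full ramification) are the routine parts; the $k\geq 2$, $Z\neq 1$ analysis and the Frobenius property of Proposition~\ref{nonabeliancase} and Proposition~\ref{GKsolvable} are where the proof actually lives.
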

 
For case (a) in Theorem \ref{firsttheorem} assume that $K=1$, that is, $\chi$ is faithful. Then $G/Z$ is almost simple with socle $M/Z$ where $M$ is quasisimple. Note that $\chi_{M}$ is irreducible and if $ \mathcal{C} $ is the unique conjugacy class of zeros of $\chi$ in $G$, then $\mathcal{C}$ is the union of $M$-conjugacy classes $\mathcal{C}_{1},\dots, \mathcal{C}_{r} $ with $r\leq |G:M|=|G/Z:M/Z|\leq |\Out(M/Z)|$. Observe that all zeros of $\chi_M$ have the same order which is a power of $ p $ for some prime $p$. Also note that $ Z(G)=Z(M) $.

We thus look at this general problem:
 
\begin{problem}\label{classifyp-powerorder}
For each quasisimple group $M$, classify all faithful characters $\chi$ such that
\begin{itemize}
\item[(i)] $ \chi $ vanishes on elements of the same $p$-power order;
\item[(ii)] the number of conjugacy classes that $\chi$ vanishes on is at most the size of the outer automorphism group of $M/Z(M)$;
\item[(iii)] if $ Z(M)\not= 1 $, then $ Z(M) $ is cyclic and of $ p $-power order.
\end{itemize}
\end{problem}

We remark that condition (iii) of Problem \ref{classifyp-powerorder} is necessary by Lemma \ref{uniqueminimal}. We solve Problem \ref{classifyp-powerorder} when $ M/Z(M) $ is isomorphic to either a sporadic group, an alternating group $ \rm{A}_{n} $, $ n\geq 5 $ or $ \PSL_{2}(q) $, $ q\geq 4 $ a prime power. Before we give the result, we make some remarks on notation. Since $ \PSL_{2}(4)\cong \PSL_{2}(5)\cong \rm{A}_{5} $, $ \rm{A}_{6}\cong \PSL_{2}(9) $, $ 2{\cdot} \rm{A}_{5}\cong \SL_{2}(5) $ and $ \rm{S}_{5}\cong \PGL_{2}(5) $ we shall use them interchangeably. In Theorems \ref{classificationp-powerorder}(3) and \ref{classificationoneclass}(4), we adopt the notation used in Atlas \cite{CCNPW85}.
\begin{theorem}\label{classificationp-powerorder}
Let $ M $ be a quasisimple group such that $ M/Z(M) $ is isomorphic to either a sporadic group, an alternating group $ \A_{n} $, $ n\geq 5 $ or $ \PSL_{2}(q) $, $ q\geq 4 $ a prime power. Suppose that $ M $ has a faithful irreducible character $ \chi $ such that: 
\begin{itemize}
\item[(i)] $ \chi $ vanishes on elements of the same $p$-power order;
\item[(ii)] the number of conjugacy classes that $\chi$ vanishes on is at most the size of the outer automorphism group of $M/Z(M)$;
\item[(iii)] if $ Z(M)\not= 1 $, then $ Z(M) $ is cyclic and of $ p $-power order.
\end{itemize}
Then $ M $ is one of the following:
\begin{itemize}
\item[(1)] $ M=\PSL_{2}(5) $, $ \chi (1)=3 $ or $ \chi (1)=4 $;
\item[(2)] $ M= \SL_{2}(5) $, $ \chi (1)=2 $ or $ \chi (1)=4 $;
\item[(3)] $ M=3{\cdot} \A_{6} $, $ \chi(1)=9 $;
\item[(4)] $ M= \PSL_{2}(7) $, $ \chi(1)=3 $;
\item[(5)] $ M= \PSL_{2}(8) $, $ \chi (1)=7 $;
\item[(6)] $ M=\PSL_{2}(11) $, $ \chi(1)=5 $ or $ \chi(1)=10 $;
\item[(7)] $ M= \PSL_{2}(q)$, $ \chi(1)=q $, where $ q\geq 5 $.
\end{itemize}
\end{theorem}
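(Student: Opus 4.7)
The plan is to proceed by a case analysis on the isomorphism type of $S := M/Z(M)$: sporadic, alternating, or $\PSL_2(q)$. Condition (iii) is an enormous simplifier, because requiring $Z(M)$ cyclic of prime-power order leaves, via the Schur multiplier tables in the Atlas \cite{CCNPW85}, only finitely many admissible covers $M$ per socle type. For $S = \A_n$ with $n \geq 5$ the admissible covers are $\A_n$, $2{\cdot}\A_n$, $3{\cdot}\A_6$, and $3{\cdot}\A_7$; for $S = \PSL_2(q)$ they are $\PSL_2(q)$, $\SL_2(q)$ (for $q$ odd), together with the exceptional $3{\cdot}\A_6$ cover of $\PSL_2(9)$; and for each sporadic $S$ only a handful of covers survive.

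For the sporadic case I would inspect the Atlas character table of each admissible cover and check every faithful irreducible $\chi$ against conditions (i)--(ii); since all the data are finite and explicit, this is a mechanical but terminating check, and in the end no $\chi$ survives. For the alternating case I would first handle $n \le 8$ from the Atlas, accounting for item (3) and the relevant $\SL_2(5)$ entries in item (2); for $n \geq 9$ I would appeal to the Murnaghan--Nakayama rule applied to the partition $\lambda \vdash n$ labelling the irreducible $\chi$. The objective is to exhibit, for every such $\lambda$, two distinct primes $p$ and cycle types $(p^{a_p}, 1^{n - p^{a_p}})$ on which $\chi^\lambda$ vanishes; this would force zeros of two distinct prime-power orders, violating (i). Standard hook-length arithmetic on $\lambda$ delivers this once $n$ is sufficiently large.

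For the $\PSL_2(q)$ case I would rely on the classical character table of $\PSL_2(q)$ and $\SL_2(q)$ (Jordan--Schur--Dickson). The nonlinear irreducible characters split into the Steinberg character $\mathrm{St}$ of degree $q$, the principal series of degree $q+1$, the discrete series of degree $q-1$, and, for $q$ odd, two exceptional characters of degree $(q\pm 1)/2$. The Steinberg character vanishes precisely on the nontrivial $p$-elements, which form one class when $q$ is even and two $\Out$-conjugate classes when $q$ is odd; since $|\Out(\PSL_2(q))| \geq 2$ for $q \geq 5$, conditions (i)--(ii) are satisfied and case (7) follows. For the other families the vanishing set is directly computable from the character formulae in terms of roots of unity in $\F_q^\times$ or $\F_{q^2}^\times$; except for the small prime powers $q = 5, 7, 8, 9, 11$ producing cases (1)--(6), these zeros always involve elements of at least two distinct prime-power orders, killing (i). The exceptional covers $\SL_2(5)$ and $3{\cdot}\A_6$ are handled directly from the Atlas, contributing cases (2) and (3).

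The main obstacle is the uniform treatment of the principal- and discrete-series characters of $\PSL_2(q)$ as $q$ varies. A principal-series character $\chi_i$ vanishes at a split-torus element $t$ exactly when $t^{2i} = 1$ and $t^i \neq \pm 1$, and analogous statements hold for the discrete series on the non-split torus and for the exceptional half-degree characters on either torus. The vanishing pattern therefore depends on $\gcd(i,q-1)$, $\gcd(i,q+1)$, and the cyclotomic distribution of roots of unity of the relevant orders. Showing that, outside the small exceptional list, no choice of $i$ produces a zero set concentrated on a single prime-power order, while also verifying by hand that the boundary values recover precisely cases (1)--(6), is where the bulk of the work sits.
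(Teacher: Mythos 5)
Your overall strategy (Atlas checks for the finitely many small cases, asymptotic vanishing arguments for large $n$ and large $q$) matches the paper's, but two of your key mechanisms would fail as stated. The more serious one is in the $\PSL_2(q)$ case: you propose to eliminate the principal- and discrete-series characters by showing their zeros ``always involve elements of at least two distinct prime-power orders,'' i.e.\ by violating condition (i). This is false for infinitely many $q$. The degree-$(q+1)$ characters of $\PSL_2(q)$ vanish precisely on the classes of the non-split torus, whose elements have order dividing $(q+1)/\gcd(2,q-1)$; whenever this number is a prime power (e.g.\ $q=53$, where $(q+1)/2=27$, or any $q$ with $q+1=2^k$), \emph{all} of these zeros are elements of a single prime-power order and (i) is satisfied. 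The same happens for the degree-$(q-1)$ and half-degree characters when $q-1$ is of the analogous form. The paper avoids this entirely by invoking condition (ii) instead: each such character vanishes on roughly $(q-1)/4$ or more conjugacy classes, which exceeds $|\Out(\PSL_2(q))|=2f$ once $q>32$ (Lemma~\ref{outerlessthanconjugacyclasses}). You cannot dispense with this counting argument; condition (i) alone does not close the case.

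In the alternating case there are two further gaps. First, you restrict the Murnaghan--Nakayama search to cycle types $(p^{a},1^{n-p^{a}})$; for the character $\chi_{(n-1,1)}$ the value on such a class is $n-1-p^{a}$, which is nonzero for all but at most one prime power, so this family of cycle types cannot produce zeros at two distinct primes for every $\lambda$. The paper's Proposition~\ref{ord6vanishingelement} has to use genuinely mixed cycle types such as $(4,2^{(n-5)/2},1)$, and combines the prime-power-degree classification of \cite{BBOO01} with \cite{BO04} rather than raw hook arithmetic. Second, you list $2{\cdot}\A_n$ among the admissible covers but then only discuss characters labelled by partitions of $n$ via the ordinary Murnaghan--Nakayama rule; the faithful (spin) characters of $\tilde{\A}_n$ are governed by the bar-partition combinatorics of \cite{HH92}, and the paper needs a separate vanishing statement for them (Lemma~\ref{psingularvanishingelement}, producing both a $3$-singular and a $5$-singular zero for $n\geq 14$). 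Without an argument covering the spin characters for large $n$, the classification of quasisimple $M$ with $M/Z(M)\cong\A_n$ is incomplete.
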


\begin{theorem}\label{classificationoneclass}
Let $ G $ be a finite group with a composition factor isomorphic to either a sporadic group, an alternating group $ \A_{n} $, $ n\geq 5 $ or $ \PSL_{2}(q) $, $ q\geq 4 $ a prime power. Then $ \chi \in \Irr (G) $ is faithful, primitive and vanishes on one conjugacy class if and only if $ G $ is one of the following groups:
\begin{itemize}
\item[(1)] $ G = \PSL_{2}(5) $, $ \chi (1)=3 $ or $ \chi (1)=4 $;
\item[(2)] $ G= \SL_{2}(5) $, $ \chi (1)=2 $ or $ \chi (1)=4 $;
\item[(3)] $ G\in \{\A_{6}{:}2_{2},~ \A_{6}{:}2_{3},~ 3{\cdot} \A_{6}{:}2_{3}\} $, $ \chi(1)=9 $ for all such $ \chi\in \Irr (G) $;
\item[(4)] $ G=\PSL_{2}(7) $, $ \chi (1)= 3 $;
\item[(5)] $ G=\PSL_{2}(8){:}3 $, $ \chi (1)=7 $;
\item[(6)] $ G=\PGL_{2}(q) $, $ \chi (1)=q $, where $ q\geq 5 $.
\end{itemize}
\end{theorem}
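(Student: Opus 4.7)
The plan is to establish both directions by combining Theorem~\ref{firsttheorem} and Theorem~\ref{classificationp-powerorder} with a systematic analysis of almost simple overgroups. For the forward (``if'') direction, I would verify each of the six listed cases directly from character tables in the Atlas \cite{CCNPW85} and the GAP character table library: confirm the existence of a faithful irreducible character of the stated degree, check primitivity by ruling out induction from every class of maximal subgroups, and count the vanishing conjugacy classes. For the infinite family in case~(6) the relevant character is the Steinberg character of $\PGL_{2}(q)$ of degree $q$, whose zeros and non-induction can be handled uniformly from the standard character table of $\PGL_{2}(q)$.

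For the reverse (``only if'') direction, take $G$ and $\chi$ satisfying the hypotheses. Since $\chi$ is faithful, Theorem~\ref{firsttheorem} gives $K=1$ and places us in case (a) or case (b). Case (b) forces $M$ to be an extra-special $p$-group with cyclic centre $Z$ of order $p$ and $G/M$ to embed into a Frobenius complement; for $G$ to be non-solvable with a composition factor among our families, that complement must be non-solvable, which by Zassenhaus's classification of Frobenius complements forces $\SL_{2}(5)$ as a normal subgroup of $G/Z$; a short direct argument then rules this out, since such a $G$ cannot simultaneously carry a primitive $\chi$ with a single vanishing class contained in the extra-special kernel. We are therefore in case~(a): $G/Z$ is almost simple with socle $M/Z$, $M$ is quasisimple, $\chi_{M}$ is irreducible and faithful, and by Lemma~\ref{uniqueminimal} together with condition~(i) of Problem~\ref{classifyp-powerorder} the centre $Z=Z(M)$ is cyclic of $p$-power order. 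The pair $(M,\chi_{M})$ now satisfies the hypotheses of Theorem~\ref{classificationp-powerorder}, leaving only the seven possibilities listed there.

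For each pair $(M,\theta)$ with $\theta=\chi_{M}$ from Theorem~\ref{classificationp-powerorder}, it remains to enumerate the almost simple overgroups $G/Z$ with socle $M/Z$ and, for each, check three conditions: (i) $\theta$ is $G$-invariant and extends to an irreducible character $\chi\in\Irr(G)$; (ii) $\chi$ is primitive in $G$; and (iii) $G/M$ fuses the $M$-classes of zeros of $\theta$ into a single $G$-class. Conditions (i) and (iii) can be read from the Atlas together with the known action of outer automorphisms on conjugacy classes, while (ii) requires consulting the maximal subgroup lattice. A representative subtlety occurs in case~(3): the degree-$9$ character of $3{\cdot}\A_{6}$ has two zero classes of elements of order $4$, and exactly the outer automorphisms of type $2_{2}$ and $2_{3}$ swap them, singling out precisely the three extensions listed; the automorphism of type $2_{1}$, by contrast, fixes each class separately, giving two vanishing classes in $\mathrm{S}_{6}$ and hence excluding it.

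The main obstacle is the uniform handling of the infinite family $\PSL_{2}(q)$: one must show that the Steinberg character extends to $\PGL_{2}(q)$, is primitive there, and that its two $\PSL_{2}(q)$-classes of unipotent zeros are fused into a single class by the diagonal automorphism, and then rule out every intermediate almost simple group between $\PSL_{2}(q)$ and $\Aut(\PSL_{2}(q))$ other than $\PGL_{2}(q)$ itself. The presence of field automorphisms for non-prime $q$ means that whenever such automorphisms are adjoined one must verify that either the Steinberg character fails to extend irreducibly, or primitivity breaks via induction from a subfield-type stabiliser, or additional zero classes are created. Parity and divisibility arguments involving $q$ and its prime factorisation will do most of the work, but the bookkeeping for the field-automorphism extensions is the delicate step.
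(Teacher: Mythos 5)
Your overall architecture is exactly the paper's: reduce via Theorem~\ref{firsttheorem} with $K=\kernel\chi=1$, feed $(M,\chi_{M})$ into Theorem~\ref{classificationp-powerorder}, and then enumerate the almost simple groups between $M/Z$ and $\Aut(M/Z)$, checking extendability, fusion of the zero classes, and primitivity, with the Steinberg character of $\PGL_{2}(q)$ carrying the infinite family (this is Propositions~\ref{a5toa13} and~\ref{overallclassificationLietype}; for primitivity the paper does not sift through maximal subgroups but quotes Burness--Tong-Viet, Theorem~\ref{imprimitive}, and compares degrees). The serious problem is your disposal of case~(b). You rightly sense that a non-solvable $G$ in case~(b) would have a Frobenius complement involving $\SL_{2}(5)$, but the ``short direct argument'' you promise does not exist. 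Concretely, let $M=11^{1+2}$ be extraspecial of exponent $11$ with $Z=Z(M)$ of order $11$, and let $H=\SL_{2}(5)\leqslant \SL_{2}(11)$ act on $M$ trivially on $Z$ and fixed-point-freely on $M/Z$ (the exceptional near-field action, $|H|=11^{2}-1$). Since the Schur multiplier of $\SL_{2}(5)$ is trivial, the fully ramified degree-$11$ character of $M$ extends to a faithful $\chi\in\Irr(G)$ for $G=M\rtimes H$; one checks $\chi\bar{\chi}=1_{G}+\mu^{G}$ for $1\neq\mu\in\Irr(M/Z)$, so $|\chi(g)|=1$ off $M$, while $H$ permutes the $M$-classes $cZ\subseteq M\setminus Z$ regularly, so $M\setminus Z$ is a single class of zeros; and $G$ has no subgroup of index $11$ because $\SL_{2}(5)$ acts irreducibly on $M/Z$, so $\chi$ is primitive. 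So such a $G$ \emph{can} carry a faithful primitive $\chi$ vanishing on one class contained in the extraspecial kernel, and your step fails. Any correct exclusion of case~(b) must instead go through the paper's assertion in Theorem~\ref{generalreduction} that case~(b) entails $G/K$ solvable (so $K=1$ is impossible for non-solvable $G$); you should scrutinise whether Proposition~\ref{GKsolvable} actually establishes solvability of the Frobenius complement, because the class of groups your Zassenhaus detour identifies is precisely where that assertion is under pressure.

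A second, smaller gap is your ``representative subtlety'' for case~(3). The zeros of the faithful degree-$9$ character of $3{\cdot}\A_{6}$ are $3$-elements, not elements of order $4$: condition~(iii) of Theorem~\ref{classificationp-powerorder} forces $p=3$ there (Theorem~\ref{2a5}), so by condition~(i) every zero has order a power of $3$. More importantly, the selection rule you state --- keep exactly those extensions whose outer automorphism fuses the zero classes --- would also admit $3{\cdot}\A_{6}{:}2_{2}$, which is not in the theorem's list. What excludes $3{\cdot}\A_{6}{:}2_{1}$ and $3{\cdot}\A_{6}{:}2_{2}$ is that the automorphisms $2_{1}$ and $2_{2}$ invert $Z(M)$ and hence interchange the two faithful degree-$9$ characters of $3{\cdot}\A_{6}$, which therefore induce to degree $18$ rather than extend; only under $2_{3}$ are they invariant. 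Fusion of zero classes decides the $\A_{6}$ cases ($\SSS_{6}$ keeps the two order-$3$ classes apart, $\A_{6}{:}2_{2}$ and $\A_{6}{:}2_{3}$ merge them) but not the triple-cover case, and the two conditions must be kept separate; the paper avoids the issue by verifying the finitely many candidates directly in GAP and the Atlas.
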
 

Lastly we partially answer Question \ref{Q1}: 

\begin{corollary}\label{Q1answer}
If $ G $ is a finite group that has a faithful irreducible character which vanishes on one conjugacy class, then $ G $ has at most one non-abelian composition factor.
\end{corollary}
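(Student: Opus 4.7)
The plan is to split the analysis according to whether $\chi$ is primitive. If $G$ is solvable there is nothing to prove, so assume $G$ is non-solvable, and let $\chi \in \Irr(G)$ be faithful and vanishing on a single conjugacy class.

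Suppose first that $\chi$ is primitive, so that Theorem~\ref{firsttheorem} applies directly. Since $\chi$ is faithful, the kernel $K$ is trivial and hence $Z = Z(\chi) = Z(G)$. In case (a), $G/Z$ is almost simple with non-abelian socle $M/Z$, while $G/M$ embeds in $\Out(M/Z)$, which is solvable by the Schreier conjecture; consequently $M/Z$ is the only non-abelian composition factor of $G$, and all other composition factors arise from the abelian centre $Z$ or the solvable outer quotient $G/M$. In case (b), $M$ is an extra-special $p$-group and therefore has only abelian composition factors, so every non-abelian composition factor of $G$ must arise from the complement $G/M$ of the Frobenius group $G/Z$. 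By the classical Zassenhaus--Burnside description of Frobenius complements, such a group is either solvable or has $\SL_2(5)$ as a normal subgroup of index at most two, contributing at most one non-abelian composition factor, namely $\A_5$.

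If instead $\chi$ is imprimitive, one may take it to be induced from a primitive irreducible character of a maximal subgroup of $G$. I would then invoke the analysis of Burness and Tong-Viet \cite{BT-V15}, which handles precisely this situation, and read off from their classification that again at most one non-abelian composition factor can appear when $\chi$ is faithful. Combining the two cases completes the proof.

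The step I expect to be the main obstacle is the imprimitive case. Theorem~\ref{firsttheorem} gives a clean structural dichotomy that essentially settles the primitive case once Schreier's conjecture and Zassenhaus's theorem are in hand, but bounding the number of non-abelian composition factors in the imprimitive setting appears to require an actual pass through the list of groups isolated in \cite{BT-V15}, rather than a single general argument, since one no longer controls the socle structure of $G$ from a single application of Clifford theory to $\chi$.
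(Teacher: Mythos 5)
Your proposal is correct and follows essentially the same route as the paper: split on primitivity, use Theorem~\ref{firsttheorem} together with the solvability of $\Out(M/Z)$ (Schreier) in the primitive case, and run through the Burness--Tong-Viet classification (Theorem~\ref{imprimitive}) in the imprimitive case, where only cases (ii)(c), (iii) and (iv) are non-solvable and each visibly contributes a single non-abelian composition factor. The only cosmetic difference is in case (b) of the primitive dichotomy: you dispose of it via Zassenhaus's description of Frobenius complements, while the paper observes that for faithful $\chi$ this case forces $G$ to be solvable and so does not arise; both are valid.
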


 The paper is organized as follows: In Section 2 we list the preliminary results we need to prove our main results. In Section 3 we reduce the main problem to almost simple and quasisimple groups, thus proving Theorem \ref{firsttheorem}. In Section 4 we prove Theorem \ref{classificationp-powerorder}. We prove Theorem \ref{classificationoneclass} in Section 5. Lastly we prove Corollary \ref{Q1answer}.
\section{Preliminaries}
\begin{lemma}\cite[Theorem 1]{Bla94}\label{quasisimplecharacterization}
Assume that $ G $ is a quasisimple group and let $ z\in Z(G) $. Then one of the following holds:
\begin{itemize}
\item[(i)] $ \textsl{order}(z)=6 $ and $ G/Z(G)\cong \A_{6}, \A_{7}, \rm{Fi}_{22}, \PSU_{6}(2) $ or $ ^{2}\rm{E}_{6}(2); $
\item[(ii)] $ \textsl{order}(z)=6 $ or $ 12 $ and $ G/Z(G)\cong \PSL_{3}(4), \PSU_{4}(3) $ or $ \rm{M}_{22}; $
\item[(iii)] $ \textsl{order}(z)=2 $ or $ 4 $, $ G/Z(G)\cong \PSL_{3}(4) $, and $ Z(G) $ is non-cyclic;
\item[(iv)] $ z $ is a commutator.
\end{itemize}
\end{lemma}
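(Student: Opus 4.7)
The plan is to prove the contrapositive: assuming $z \in Z(G)$ is not a commutator in the quasisimple group $G$, force $(G,z)$ into the exceptional lists (i)--(iii). The main tool is Frobenius' classical criterion, which says that an element $g \in G$ is a commutator if and only if
$$\sum_{\chi \in \Irr(G)} \frac{\chi(g)}{\chi(1)} \neq 0.$$
Since $z$ is central, Schur's lemma gives $\chi(z) = \omega_\chi(z) \chi(1)$, where $\omega_\chi(z)$ is a root of unity depending only on $\chi$ and the image of $z$ under the central character. The criterion therefore reduces to the purely combinatorial statement that $\sum_{\chi \in \Irr(G)} \omega_\chi(z) = 0$, where the sum ranges over a multiset of roots of unity determined by the central characters of $G$.

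The next step is to invoke the Classification of Finite Simple Groups and examine Schur covers family by family. The Schur multipliers of all finite simple groups are known, so for each simple $S$ one has an explicit finite list of pairs $(G,z)$ with $G$ a perfect central extension of $S$ and $z \in Z(G)$. For the sporadic groups, the small alternating groups, and the small classical groups $\PSL_{3}(4)$, $\PSU_{4}(3)$, $\PSU_{6}(2)$, and $ ^{2}\rm{E}_{6}(2) $ (which are precisely the groups with exceptionally large Schur multipliers), the central characters are tabulated in the ATLAS, and a direct computation of $\sum \omega_{\chi}(z)$ pinpoints the pairs $(G,z)$ appearing in (i)--(iii). For all other simple $S$, one then has to show that every central element of every cover is a commutator.

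For the generic Lie type families, the strategy is to use Deligne--Lusztig theory to partition $\Irr(G)$ into rational Lusztig series, evaluate the central characters via the identification of $Z(G)$ with a quotient of the fundamental group of the dual group, and establish a lower bound on $|\sum_{\chi} \omega_{\chi}(z)|$ coming from the contribution of the principal series together with the Steinberg character. For large alternating groups one instead writes $z$ explicitly as a single commutator in $2 {\cdot} \A_{n}$ using well-chosen pairs of permutations of disjoint support. The hard part is the uniform treatment of the Lie type case: the number of characters grows with $q$, and the potential cancellation among the roots of unity $\omega_{\chi}(z)$ is delicate, so the argument must leverage the structure of the dual group and Steinberg's presentation of the universal cover to rule out accidental vanishing outside the finite list of exceptions.
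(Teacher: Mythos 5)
This lemma is not proved in the paper at all: it is imported verbatim as a citation of Blau's theorem, so the only ``proof'' in the paper is that reference. Measured against the statement itself, your proposal is a strategy outline rather than a proof, and the gap sits exactly where you yourself locate ``the hard part.'' The opening reduction is fine: Frobenius' criterion together with $\chi(z)=\omega_\chi(z)\chi(1)$ for central $z$ correctly turns the question into the non-vanishing of $\sum_{\chi\in\Irr(G)}\omega_\chi(z)$, and this is indeed the standard first move. But every decisive step after that is asserted rather than established: the lower bound on $\bigl|\sum_{\chi}\omega_\chi(z)\bigr|$ for universal covers of groups of Lie type (where the number of summands grows with $q$ and cancellation must be controlled), the explicit commutator expressions for the central involution of $2{\cdot}\A_n$ for large $n$, and the verification that the surviving exceptions are exactly those in (i)--(iii). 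Since the entire content of the theorem is precisely that this character sum fails to vanish outside a short explicit list, deferring the non-vanishing argument defers the theorem itself.

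Two further concrete problems. First, the parenthetical claim that your finite list of groups to be checked by hand comprises ``precisely the groups with exceptionally large Schur multipliers'' is not accurate: besides $\A_6$, $\A_7$, $\PSL_3(4)$, $\PSU_4(3)$, $\PSU_6(2)$, $\mathrm{M}_{22}$, $\mathrm{Fi}_{22}$ and ${}^2\mathrm{E}_6(2)$, the simple groups with exceptional multipliers include $\mathrm{Sz}(8)$, $\PSU_4(2)$, $\mathrm{G}_2(3)$, $\mathrm{G}_2(4)$, $\Omega_7(3)$, $\mathrm{P}\Omega_8^+(2)$ and $\mathrm{F}_4(2)$, all of whose covers must be examined and then discharged into case (iv); moreover for the largest covers (e.g.\ the $(2^2{\times}3)$-cover of ${}^2\mathrm{E}_6(2)$) the full character table is not simply read off the ATLAS, so ``a direct computation'' is not a routine step there. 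Second, nothing in your sketch engages with the non-cyclic-centre phenomenon behind case (iii) for $\PSL_3(4)$: one must track, inside the $4^2{\times}3$ multiplier, which central elements of which quotient covers fail to be commutators, and that is finer information than the central character sums of the single full covering group. In short, the skeleton is the right (and essentially the classical) one, but as written this is a plan for reproving Blau's theorem, not a proof of it; for the purposes of this paper the correct move is simply the citation the author gives.
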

\begin{theorem}\label{p-powerorderelements}\cite[Theorem B, Theorem 3.4 and Theorem 5.1]{MNO00}\cite[Theorem 1.2]{BO04}
Let $ G $ be a finite group and let $ \chi \in \Irr (G) $ be non-linear. Then there exists $ g\in G $ of prime-power order such that $ \chi (g)=0 $. If $ G $ is simple or a symmetric group, we can choose $ g $ to be of prime order. 
\end{theorem}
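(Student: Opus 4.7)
The strategy is an induction on $|G|$, combined with an appeal to the classification of finite simple groups to handle the almost simple and quasisimple cases. Given $\chi \in \Irr(G)$ non-linear, pick a minimal normal subgroup $N$. If $N \leqs \kernel\chi$ then $\chi$ descends to a non-linear character of $G/N$ and we induct. Otherwise, write $\chi_N = e(\theta_1 + \cdots + \theta_t)$ with the $\theta_i$ a $G$-orbit of irreducibles of $N$. It then suffices to find $n\in N$ of prime-power order such that $\theta_i(n)=0$ for all $i$, because this forces $\chi(n)=0$.

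When $N$ is elementary abelian the $\theta_i$ are linear and cannot vanish on $N$; the standard trick is to use the inertia subgroup $I_G(\theta_1)$, Clifford correspondence, and look for a zero outside $N$ so as to apply induction to a proper subgroup. When $N$ is non-abelian it has the form $S^k$ for a non-abelian simple $S$, and the problem reduces to finding a prime-power-order element in a single copy of $S$ on which a non-linear irreducible of $S$ (or of its automorphism group) vanishes. This is the step where CFSG must be invoked.

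Thus the heart of the proof is the case of simple (or almost simple/quasisimple) groups. For alternating and symmetric groups one uses the combinatorics of partitions: the Murnaghan--Nakayama rule produces a prime-power cycle type on which the character $\chi^\lambda$ vanishes, typically extracted via an argument about hooks of prime-power length in $\lambda$. For sporadic groups the claim is checked directly from the \textsc{Atlas}. For groups of Lie type one employs Deligne--Lusztig theory: via Jordan decomposition an arbitrary irreducible character is controlled by a unipotent character of a smaller reductive group, for which one exhibits a semisimple or regular unipotent element of prime-power order in a suitable torus or Sylow subgroup on which the character vanishes.

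The refinement from prime-power order to prime order when $G$ is simple or symmetric (the Bessenrodt--Olsson statement) demands sharper input. For $\SSS_{n}$ it rests on counting hooks of prime length in $\lambda$, combined with the Murnaghan--Nakayama recursion, to force vanishing on a class of prime cycle type; for simple groups of Lie type it requires a finer analysis of the semisimple classes of prime order in the ambient algebraic group, together with cuspidal character estimates. I expect the main obstacle to be precisely the simple groups of Lie type: there is no single uniform argument, and one must combine Deligne--Lusztig theory with case analysis across the Lie families to guarantee the existence of an element of prime (rather than merely prime-power) order on which the character vanishes.
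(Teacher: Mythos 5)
First, a point of order: the paper does not prove this statement. It is quoted, with citations, from Malle--Navarro--Olsson \cite{MNO00} and Bessenrodt--Olsson \cite{BO04} and used as a black box, so there is no internal proof to compare yours against; your sketch has to be judged against the actual arguments in those papers. Your outline does capture their overall architecture --- a Clifford-theoretic reduction to simple and quasisimple groups followed by a CFSG case analysis (partition combinatorics for alternating and symmetric groups, the Atlas for sporadics, Lie-theoretic input for groups of Lie type) --- but as written it is a plan rather than a proof, and the plan has a genuine gap at its central step.

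The gap is in the reduction. When $N=S_1\times\cdots\times S_k$ is non-abelian and $\chi_N=e(\theta_1+\cdots+\theta_t)$ is a sum over a $G$-orbit, it is indeed sufficient to find $n\in N$ of prime-power order with $\theta_i(n)=0$ for every $i$, but you give no mechanism for producing such an $n$. Knowing merely that each non-linear irreducible character of $S$ vanishes on \emph{some} element of prime-power order does not let you synchronize these zeros: the $\theta_i$ are distinct characters, each is itself an outer tensor product over the $k$ simple factors, and a single element $n$ must kill all of them at once while staying of prime-power order. The device that closes this step in \cite{MNO00} is a stronger input from the simple groups: one produces a prime $p$ and a constituent $\theta$ vanishing on \emph{all} $p$-singular elements of $N$ (via characters of $p$-defect zero, or the Steinberg character in defining characteristic). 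Since the set of $p$-singular elements of $N$ is invariant under $G$-conjugation, every character in the $G$-orbit of $\theta$ vanishes there as well, and then any element of order $p$ in $N$ is a zero of $\chi$. Without isolating this ``vanishes on all $p$-singular elements'' property, the inductive step does not go through. The same incompleteness affects the abelian case (the inertia-group manoeuvre is only gestured at, and the case of a $G$-invariant linear constituent is not addressed) and the Lie-type case, which you explicitly defer; note also that \cite{MNO00} do not argue via Deligne--Lusztig theory and Jordan decomposition in the way you describe, but via defect-zero characters and suitable primes. Since the theorem is invoked here only as a known result, the appropriate course in this paper is to cite it, as the author does, rather than to reprove it.
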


\begin{lemma}\label{trivialcentralizersfornonzeros}
Let $ \chi \in \Irr (G) $ be faithful and  $ [x, y]\in Z(G) $ for some $ x, y\in G $. If $ \chi (x)\neq 0 $, then $ [x, y]=1 $.
\end{lemma}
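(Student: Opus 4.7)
The plan is to exploit the fact that the commutator $z:=[x,y]$ lies in $Z(G)$, so multiplication by $z$ acts on $\chi$-values by a scalar, and then compare $\chi(x)$ to $\chi$ evaluated at a conjugate of $x$.

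First I would write $y^{-1}xy = x[x,y] = xz$, so $x$ and $xz$ are conjugate in $G$, and therefore $\chi(x) = \chi(xz)$. Next, since $\chi$ is irreducible, Schur's lemma gives a linear character $\lambda$ of $Z(G)$ with $\chi(zg) = \lambda(z)\chi(g)$ for every $z \in Z(G)$ and $g \in G$ (the central character); in particular $\chi(xz) = \lambda(z)\chi(x)$. Combining these gives $\chi(x)(1-\lambda(z)) = 0$, and the hypothesis $\chi(x) \neq 0$ forces $\lambda(z) = 1$.

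To finish, I would observe that because $\chi$ is faithful, $\lambda$ is a \emph{faithful} linear character of $Z(G)$: indeed $\ker\lambda = Z(G) \cap \ker\chi = 1$. Therefore $\lambda(z) = 1$ implies $z = 1$, i.e., $[x,y] = 1$, as required.

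There is essentially no obstacle here; the only point to be slightly careful about is justifying the faithfulness of $\lambda$ on $Z(G)$, which follows immediately from the standard identification $\ker\chi \cap Z(G) = \ker\lambda$ and the assumption that $\chi$ is faithful. The argument is a one-line consequence of the central-character identity once the conjugacy observation $y^{-1}xy = xz$ is made.
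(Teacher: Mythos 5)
Your argument is correct and is essentially identical to the paper's proof (which itself follows \cite[Lemma 2.1]{MNO00}): both use the conjugacy $x^{y}=xz$ together with the central character $\lambda$ satisfying $\chi_{Z(G)}=\chi(1)\lambda$, and conclude from $\chi(x)\neq 0$ that $\lambda(z)=1$, whence $z=1$ by faithfulness of $\lambda$. Your explicit justification that $\ker\lambda=Z(G)\cap\ker\chi=1$ is a welcome (and correct) elaboration of the paper's brief remark that $\lambda$ is faithful.
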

\begin{proof}
We use the argument in \cite[Lemma 2.1]{MNO00}. Suppose $ z=[x, y]\neq 1 $. Thus $ xz=x^{y} $ and $ \chi (x)=\chi (x^{y})=\chi (xz)=\chi (x)\lambda (z) $, where $ \lambda \in \Irr(Z(G)) $ such that $ \chi _{Z(G)}=\chi(1)\lambda $. Dividing by $ \chi (x) $ we have $ 1=\lambda (z) $. On the other hand, $ z\neq 1 $ implies that $ \lambda (z)\neq 1 $ since $ \lambda $ is faithful. The result follows from this contradiction.
\end{proof}
\begin{lemma}\label{uniqueminimal}
Let $ G $ be a finite non-solvable group. Let $ \chi $ be a faithful primitive irreducible character of $ G $ that vanishes on only one conjugacy class, $ \mathcal{C} $. Let $ Z=Z(G) $. Then: 
\begin{itemize}
\item[(i)] There exists a normal subgroup $ M $ of $ G $ such that $ \mathcal{C}\subseteq M\setminus Z $ and $ M/Z $ is the unique minimal normal subgroup of the group $ G/Z $.
\end{itemize}
Let $ N $ be a normal subgroup of $ G $.
\begin{itemize}
\item[(ii)] If $ N\cap \mathcal{C}=\emptyset $, then $ N\leqslant Z $;
\item[(iii)] If $ \chi _{N} $ is reducible, then $ N\leqslant Z $. If $ \chi _{N} $ is irreducible, then $ \mathcal{C}\subseteq N $ and $ M\leqslant N $.
\end{itemize}
Moreover, if $ Z\not= 1 $, then every non-trivial $ z\in Z $ is a commutator. In particular $ z=[x, y] $ for some $ x, y\in \mathcal{C} $ and $ Z $ is cyclic of prime power order.
\end{lemma}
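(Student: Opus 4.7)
The plan is to prove the four assertions in the order (ii), ``Moreover'', (i), (iii), since the later statements depend on the earlier ones. Throughout, the main engine is Clifford's theorem combined with the primitivity of $\chi$: for any normal subgroup $N$ of $G$, the Clifford decomposition $\chi_N=e(\psi_1+\cdots+\psi_t)$ must have $t=1$, otherwise $\chi$ would be induced from the inertia group $I_G(\psi_1)<G$, contradicting primitivity. So $\chi_N=e\psi$ with $\psi\in\Irr(N)$ a $G$-invariant irreducible.

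For (ii), if $N\cap \mathcal{C}=\emptyset$, then $\chi_N$, and hence $\psi$, has no zero on $N$; by Burnside's theorem, $\psi$ is linear, so $|\chi(g)|=e|\psi(g)|=e=\chi(1)$ for all $g\in N$, placing $N$ inside $Z(\chi)=Z$ by faithfulness. For the ``Moreover'' paragraph, I would first observe that $\chi_Z=\chi(1)\lambda$ with $\lambda$ a faithful linear character of $Z$ (since $\chi$ is faithful), so $Z$ embeds in $\C^{\times}$ and is cyclic. Given $z\in Z\setminus\{1\}$ and any $x\in\mathcal{C}$, the equation $\chi(zx)=\lambda(z)\chi(x)=0$ shows $zx\in\mathcal{C}$, hence $zx=x^{y}$ for some $y\in G$, yielding $[x,y]=z$. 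Applying Lemma~\ref{trivialcentralizersfornonzeros} to the pair $(y,x)$ via $[y,x]=z^{-1}\in Z\setminus\{1\}$ then forces $\chi(y)=0$, i.e.\ $y\in\mathcal{C}$ as well. Fixing $x\in\mathcal{C}$ of prime-power order $p^{a}$ (Theorem~\ref{p-powerorderelements}), the identity $(zx)^{p^{a}}=z^{p^{a}}x^{p^{a}}=z^{p^{a}}$ combined with $zx\in\mathcal{C}$ having order $p^{a}$ gives $z^{p^{a}}=1$, so $Z$ is a $p$-group.

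For (i), any minimal normal subgroup of $G/Z$ has preimage $M\not\leqslant Z$, so (ii) gives $\mathcal{C}\cap M\neq\emptyset$ and hence $\mathcal{C}\subseteq M\setminus Z$ using normality of $M$ together with $\mathcal{C}\cap Z=\emptyset$. Uniqueness follows because two distinct minimal normals of $G/Z$ would intersect trivially while both preimages contain $\mathcal{C}$, forcing $\mathcal{C}\subseteq Z$, a contradiction. For the irreducible case of (iii), $\chi_N$ is nonlinear and therefore vanishes somewhere in $N$ by Burnside, so $\mathcal{C}\subseteq N$; the ``Moreover'' paragraph then provides $Z\leqslant N$ (each central element being a commutator of elements of $\mathcal{C}\subseteq N$), so $N$ contains the preimage $M$ of the unique minimal normal of $G/Z$.

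Finally, for the reducible case of (iii), write $\chi_N=e\psi$ with $e>1$; the task is to show $\psi$ is linear, whence $N\leqslant Z$ exactly as in (ii). If $\psi$ were nonlinear, then Burnside applied to $\psi$ would force $\mathcal{C}\subseteq N$; on the other hand, the irreducible projective representation of $G/N$ on the $e$-dimensional multiplicity space $\mathrm{Hom}_N(W,V)$ must vanish on some non-identity coset (projective Burnside), yielding an element of $\mathcal{C}$ outside $N$ and a contradiction. The hard part will be making this last step precise, since it requires careful projective Clifford-theoretic bookkeeping when $\psi$ does not extend to $G$; once this is in place, the remaining assertions drop out cleanly from Clifford's theorem, Burnside's theorem, and Lemma~\ref{trivialcentralizersfornonzeros}.
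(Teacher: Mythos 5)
Your argument for (i), (ii), the ``Moreover'' paragraph, and the irreducible half of (iii) is correct and essentially the paper's own: Clifford plus primitivity gives $\chi_N=e\psi$ with a single $G$-invariant $\psi$, Burnside forces $\psi$ linear when $\chi$ has no zero in $N$ (the paper phrases the conclusion via \cite[Corollary 6.13]{Isa06} on abelian normal subgroups rather than via $Z(\chi)=Z(G)$, but this is cosmetic), and the commutator/order analysis for $Z$ matches the paper's use of Lemma~\ref{trivialcentralizersfornonzeros} and Theorem~\ref{p-powerorderelements}. The one place you genuinely diverge is the reducible case of (iii), which you correctly identify as the hard step. The paper disposes of it in one line by citing \cite[Theorem 21.1]{BZ99}: $2\leq[\chi_N,\chi_N]\leq 1+|\mathcal{C}\setminus N|/|N|$, so reducibility of $\chi_N$ forces a zero of $\chi$ outside $N$, hence $\mathcal{C}\cap N=\emptyset$ and (ii) applies. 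Your projective-representation route is a valid substitute and can be made precise: since $\psi$ is $G$-invariant, \cite[Theorem 11.28]{Isa06} gives $\chi(g)=\operatorname{tr}P(g)\cdot\operatorname{tr}Q(gN)$ with $Q$ an irreducible projective representation of $G/N$ of degree $e>1$, and the ``projective Burnside'' you invoke follows by lifting $Q$ to an ordinary irreducible character of a representation group $\Gamma$ of $G/N$; that character is nonlinear, so it vanishes at some $\gamma\in\Gamma$, and $\gamma$ cannot lie in the central subgroup $A$ (where the character restricts to $e$ times a linear character), so it projects to a nonidentity coset $gN$ with $\chi(g)=0$. What your approach buys is self-containedness (it reproves the relevant special case of the Berkovich--Zhmud inequality from scratch, and in fact shows directly that a reducible $\chi_N$ with nonlinear $\psi$ produces zeros both inside and outside $N$); what it costs is exactly the projective bookkeeping you flag, which the paper avoids entirely by citation. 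So there is no gap in substance, only an unexecuted (but executable) step where the paper instead quotes a known result.
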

\begin{proof}
We first show (ii) and the first part of (iii). Note that since $ N $ is a normal subgroup of $ G $ either $ N\cap \mathcal{C}=\emptyset $ or $ \mathcal{C}\subseteq N $. For (ii), if $ N\cap \mathcal{C}=\emptyset $, then $ \chi $ does not vanish on $ N $. Thus $ \chi _{N}=e\psi $, for some $ \psi \in \Irr (N) $ and a positive integer $ e $ (by Clifford's theorem and the primitivity of $ \chi $). This means that $ \psi $ does not vanish on $ N $ forcing $ \psi (1)=1 $ and $ N'\leqslant \kernel \psi \leqslant  \kernel \chi =1 $. Hence $ N $ is an abelian normal subgroup and since $ \chi $ is faithful and primitive, $ N\leqslant Z $ by \cite[Corollary 6.13]{Isa06} and (ii) follows.

If $ \chi _{N} $ is reducible, then $ [\chi_{N}, \chi _{N}] \geq 2 $. By \cite[Theorem 21.1]{BZ99}, we have $ 2\leq [\chi _{N}, \chi _{N}]\leqslant 1 + \frac{|\mathcal{C}\setminus N|}{|N|} $ which implies that $ \mathcal{C}\setminus N $ is not empty. Since $ N $ is normal in $ G $ we deduce that $ \mathcal{C}\cap N= \emptyset $. This means that $ N\leqslant Z $ by (ii). Hence the first part of (iii) holds.

We now prove (i). Choose $ M\triangleleft G $ such that $ M/Z $ is a minimal normal subgroup of $ G/Z $. If $ \mathcal{C} \nsubseteq M $, then $ \mathcal{C}\cap M=\emptyset $ and $ M\leqslant Z $ by (ii), a contradiction. Thus $ \mathcal{C}\subseteq M\setminus Z $. Suppose $ M/Z $ is not unique and let $ M_{1}/Z $ be a minimal normal subgroup of $ G/Z $. Then $ \mathcal{C} \subseteq M_{1}$ by using a similar argument as above and so $ \mathcal{C}\subseteq M \cap M_{1}=Z $. But  this is a contradiction since $ \mathcal{C} $ cannot be contained in $ Z $. Hence $ M/Z $ is unique and (i) follows. 

For the last part of (iii), if $ \chi _{N} $ is irreducible and $ N\cap \mathcal{C}=\emptyset $, then $ N\leqslant Z $ by (ii). Thus $ N $ is abelian, $ \chi $ is linear contradicting the fact that $ \chi $ vanishes on $ \mathcal{C} $. It follows that $ \mathcal{C}\subseteq N $. We claim that $ cz\in \mathcal{C} $ for all $ z\in Z $, $ c\in \mathcal{C} $. Suppose that $ \mathfrak{X} $ is a representation affording $ \chi $. Then $ \mathfrak{X} $ is a scalar representation on $ Z $ and $ \mathfrak{X}(z) $ is a scalar of the form $ \lambda I $ by \cite[Lemma 2.27(a)]{Isa06}. Evaluating, we get $ \chi (cz)= \textsl{trace}~ \mathfrak{X} (cz) =\textsl{trace}~ \lambda \mathfrak{X}(c)= \lambda \chi (c) =0 $, that is, $ cz\in \mathcal{C} $. We have that $ Z < N $ and $ N/Z $ is a normal subgroup of $ G $. By (i), $ M/Z $ is the only minimal normal subgroup of $ G/Z $, implying that $ M/Z\leqslant N/Z $, that is, $ M\leqslant N $ and the result follows.

Let $ Z $ be non-trivial. We show that every non-trivial element $ z $ of $ Z $ is a commutator. Now $ cz\in \mathcal{C} $ for $ c\in \mathcal{C} $. This means there exists $ g\in G $ such that $ cz=g^{-1}cg $ and therefore $ z=c^{-1}g^{-1}cg $ as required. To show the last part, suppose $ z $ is non-trivial and $ z=[x, y]=x^{-1}y^{-1}xy $, where $ x,y\in G $ and $ x\notin \mathcal{C} $. By Lemma \ref{trivialcentralizersfornonzeros}, $ z=[x, y]=1 $, a contradiction. Hence the result follows.

We know that $ Z $ is cyclic by \cite[Lemma 2.27(d)]{Isa06}. Let $ c $ be of order $ p^{r} $ for some positive integer $ r $ using Theorem \ref{p-powerorderelements}. Then $ z^{p^{r}}=c^{p^{r}}z^{p^{r}}=(cz)^{p^{r}}=(g^{-1}cg)^{p^{r}}=g^{-1}c^{p^{r}}g= 1 $ and so $ Z $ is of prime power order.
\end{proof}

\section{A reduction theorem}\label{reduction}

In this section we reduce our main problem to almost simple and quasisimple cases. In the following proposition we follow the proofs of Lemma 2.3 and Theorem 1.1 of \cite{Qia07} with $ \chi $ primitive. Let $ N $ be a normal subgroup of $ G $. Recall that $ G $ is a relative $ M $-group with respect $ N $ if for every $ \chi \in \Irr (G) $ there exists $ H $ with $ N\leqslant H\leqslant G $ and $ \sigma\in \Irr (H) $ such that $ \sigma ^{G}=\chi $ and $ \sigma _{N}\in \Irr (N) $.
\begin{proposition}\label{GKsolvable}
Under the hypothesis and notation of Lemma \ref{uniqueminimal}, suppose further that $ M/Z$ is abelian. Then $ G/Z $ is a  Frobenius group with an abelian kernel $ M/Z $ of order $ p^{2n} $, $ M $ is an extra-special $ p $-group and $ Z $ is of order $ p $.
\end{proposition}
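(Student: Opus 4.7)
The plan is to combine the hypothesis that $M/Z$ is abelian with Lemma~\ref{uniqueminimal} to force $M$ to be extra-special of order $p^{1+2n}$ with $|Z|=p$, and then to derive the Frobenius structure of $G/Z$ from the faithful transitive action of $G/M$ on $M/Z$, using Lemma~\ref{trivialcentralizersfornonzeros} and the primitivity of $\chi$. Since $M/Z$ is an abelian minimal normal subgroup of $G/Z$, it is elementary abelian of order $p^{k}$ for some prime $p$. By Lemma~\ref{uniqueminimal}(iii), $\chi_M$ is irreducible: otherwise $M\leq Z$, contradicting $\mathcal{C}\subseteq M\setminus Z$. From $M/Z$ abelian one has $M'\leq Z$, and the commutator statement of Lemma~\ref{uniqueminimal} (every $z\in Z\setminus\{1\}$ is $[x,y]$ with $x,y\in \mathcal{C}\subseteq M$) gives the reverse inclusion, so $Z=M'$. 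Moreover $Z(M)/Z$ is a $G/Z$-normal subgroup of $M/Z$, hence trivial or all of $M/Z$; the latter would make $\chi_M$ linear, contradicting that it vanishes on $\mathcal{C}$, so $Z(M)=Z$.

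Next, since $M/Z(M)$ has exponent $p$ and $[x,y]\in Z(M)$ is central, for any $x,y\in M$ one has $[x,y]^{p}=[x^{p},y]=1$ (using $x^{p}\in Z(M)$); hence $M'=Z$ has exponent $p$, and being cyclic by Lemma~\ref{uniqueminimal}, $|Z|=p$. So $M$ is extra-special. Standard $p$-group representation theory then gives $\chi_M(1)^{2}=|M/Z|$, so $|M/Z|=p^{2n}$, $\chi_M(1)=p^{n}$ and $|M|=p^{1+2n}$; moreover $\chi_M$ is the unique faithful irreducible of $M$ with the prescribed central character and vanishes on all of $M\setminus Z$, so together with $\mathcal{C}\subseteq M\setminus Z$ we obtain $\mathcal{C}=M\setminus Z$.

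For the Frobenius conclusion, $C_G(M)$ is normal in $G$ with $C_G(M)\cap M=Z(M)=Z$, so uniqueness of $M/Z$ as the minimal normal subgroup of $G/Z$ forces $C_G(M)=Z$. Any $g\in G\setminus M$ has $\chi(g)\neq 0$ (since $g\notin \mathcal{C}\subseteq M$), so by Lemma~\ref{trivialcentralizersfornonzeros} any $[g,m]\in Z$ must equal $1$. This shows two things: (i) no $g\in G\setminus M$ lies in the kernel of the $G/Z$-action on $M/Z$ (else $g\in C_G(M)=Z\leq M$), so $G/M$ acts faithfully; and (ii) any $g\in G\setminus M$ whose image in $G/M$ fixes a nonzero $mZ\in M/Z$ actually centralizes $m$. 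Transitivity of the $G/M$-action on $(M/Z)\setminus\{0\}$ follows from $\mathcal{C}=M\setminus Z$ being a single $G$-class projecting onto it. Sharp transitivity, equivalent to $C_G(m)=C_M(m)$ for $m\in\mathcal{C}$, then follows from primitivity of $\chi$: an enlarged centralizer $C_G(m)\supsetneq C_M(m)$ would yield a proper intermediate subgroup $M\cdot C_G(m)<G$ from which, via Clifford-theoretic bookkeeping, $\chi$ would be induced, contradicting primitivity. This gives $G/Z=(M/Z)\rtimes(G/M)$ as a Frobenius group with abelian kernel $M/Z$ of order $p^{2n}$.

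The main obstacle is the last step: using primitivity of $\chi$ to exclude enlarged centralizers $C_G(m)\supsetneq C_M(m)$. Lemma~\ref{trivialcentralizersfornonzeros} only converts modular stabilization $[g,m]\in Z$ into genuine centralization $[g,m]=1$; ruling out non-trivial point stabilizers in the $G/M$-action further requires a careful Clifford-theoretic analysis on the intermediate subgroup $M\cdot C_G(m)$, exploiting the extra-special structure of $M$ and the uniqueness of the faithful irreducible $\chi_M$ with a prescribed central character.
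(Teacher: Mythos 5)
Your first half is fine and in fact more self-contained than the paper's: the identifications $Z=M'=Z(M)$, the exponent argument giving $|Z|=p$, the conclusion that $M$ is extra-special with $\chi_M$ fully ramified over $Z$ and $\mathcal{C}=M\setminus Z$ are all correct (the paper simply cites Propositions 1 and 4 of Dixon--Rahnamai Barghi and \cite[Theorem 6.18, Problem 6.3]{Isa06} for this). The problem is the Frobenius step, which is exactly the heart of the proposition and which you yourself flag as "the main obstacle." Lemma \ref{trivialcentralizersfornonzeros} does reduce the Frobenius condition $\Centralizer_{G/Z}(\bar m)\leqslant M/Z$ to showing $\Centralizer_G(m)\leqslant M$ for $m\in M\setminus Z$, but your proposed mechanism for the latter --- that an enlarged centralizer would make $\chi$ induced from $M\cdot\Centralizer_G(m)$, contradicting primitivity --- is asserted without justification, and I do not see how to make it work: since $\chi_M$ is irreducible, the inertia group of $\chi_M$ is all of $G$, so Clifford theory over $M$ gives no induction from a proper subgroup, and the centralizer of an element has no direct character-theoretic reason to carry an inducing character. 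As written, the proof does not establish the Frobenius conclusion.

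For comparison, the paper's route is quite different and worth noting. It first shows $p\nmid |G:M|$ (via \cite[Lemma 2.2]{Qia07}, using that $\chi(1)=p^n$), so any putative fixed points come from $p'$-elements. Given a nontrivial $p'$-element $\bar x$ with $\Centralizer_{\overline M}(\bar x)>1$, it works inside $Y=\langle x\rangle M$: the coprime decomposition $\overline M=\Centralizer_{\overline M}(\langle\bar x\rangle)\times[\overline M,\langle\bar x\rangle]$ shows $T=[M,\langle x\rangle]Z$ is a proper subgroup of $M$ containing $Z$, the fully ramified $\chi_M$ vanishes off $T$, and Isaacs' relative $M$-group theorem \cite[Theorem 6.22]{Isa06} forces $\chi_Y=\lambda^Y$ for some $\lambda\in\Irr(B)$ with $T\leqslant B<Y$. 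Since all zeros of $\chi_Y$ lie in $\mathcal{C}\subset M$, this yields $Y=M\cup B$ with both $M$ and $B$ proper in $Y$, a contradiction. Some argument of this kind (or another genuine substitute) is needed to close your gap; the transitivity and faithfulness observations you make, while correct, do not by themselves rule out nontrivial point stabilizers.
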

\begin{proof}
There exists a normal subgroup $ M $ of $ G $ such that $ \mathcal{C}\subseteq M\setminus Z $ by Lemma \ref{uniqueminimal}(i). If $ \chi _{M} $ is reducible, then  $ M\leqslant Z $ by Lemma \ref{uniqueminimal}(ii). This contradicts the choice of $ M $. Hence $ \chi _{M} $ is irreducible. From \cite[Lemma 2.27(c)]{Isa06}, $ \chi _{Z} $ is reducible since $ \chi $ is non-linear. Thus $ M $ is an extra-special $ p $-group with $ Z $ of order $ p $ by \cite[Propositions 1 and 4]{DB07}.

Using \cite[Theorem 6.18]{Isa06} we have that $ \chi _{Z} = f \varphi $, where $ f^{2}=|M/Z|=p^{2n} $ for some positive integers $ f $ and $ n $ and linear character $ \varphi $ of $ Z $. It follows that $ \chi (1)=f\varphi (1)=f=p^{n} $ and hence $ \chi (1) $ is a prime power. It follows from \cite[Lemma 2.2]{Qia07} that $ p\nmid | G{:}M | $ and hence $ M $ is the unique Sylow $ p $-subgroup of $ G $.

Now we show that $ G/Z=\overline{G} $ is a Frobenius group with kernel $ M/Z=\overline{M} $. Suppose $ \textbf{C}_{\overline{M}}(\overline{x})> 1 $ for some non-trivial $ p' $-element $ \overline{x} $ of $ \overline{G} $. Let $ \overline{Y}=\langle \overline{x} \rangle \overline{M} $, $ \overline{T}=[\overline{M}, \langle \overline{x} \rangle ] $ with $ Y= \langle x \rangle M $. Then  $ \overline{Y}'= [\overline{M}\langle \overline{x}\rangle, \overline{M}\langle \overline{x}\rangle ]= [\overline{M}, \overline{M}\langle \overline{x}\rangle ][\langle \overline{x}\rangle, \overline{M}\langle \overline{x}\rangle ] ~~=[\overline{M}, \overline{M}][\langle \overline{x}\rangle, \overline{M}][\langle \overline{x}\rangle, \overline{M}][\langle \overline{x}\rangle, \langle \overline{x}\rangle ]= [\overline{M}, \langle \overline{x} \rangle]=\overline{T} $ since $ [\overline{M}, \overline{M}]=1$, $ [\langle \overline{x}\rangle, \langle \overline{x}\rangle ] =1  $. 

We claim that $ \overline{T}=\overline{Y}'< \overline{M} $. Since $ \langle \overline{x} \rangle \overline{M} $ is a semidirect product of $ \langle \overline{x} \rangle $ and $ \overline{M} $, $ \langle\overline{x} \rangle $ acts via automorphisms on $ \overline{M} $. By \cite[Theorem 4.34]{Isa08} we have $ \overline{M}= \textbf{C}_{\overline{M}}(\langle \overline{x}\rangle)\times [\overline{M}, \langle \overline{x}\rangle ] $ because $ (|\overline{M}|, |\langle \overline{x}\rangle|)=1 $. Since $ \textbf{C}_{\overline{M}}(\overline{x}) $ is non-trivial, it follows that $ \overline{Y}'< \overline{M} $ as required.

Let $ \chi _{M} = \rho $, $ \psi = \chi _{Y} $, $ \rho = \chi _{M} = \psi _{M} $, and let $ \delta $ be an irreducible constituent of $ \chi _{T} $. Observe that $ M/Z $ is an abelian chief factor of $ G $, $ \rho=\chi_{M} $  is irreducible, so it is $ G $-invariant. Moreover, $ \rho_{Z}=\chi_{Z}=\chi(1) \mu $ for some $ G $-invariant $ \mu\in \Irr(Z)$. Using \cite[Theorem 6.18]{Isa06}, we can see that $\rho$ is fully ramified in $M/Z$ and by \cite[Problem 6.3]{Isa06}, $\rho$ vanishes on $M\setminus Z$. Note that $ Z\leqslant T $. Thus $ \rho $ vanishes on $ M\setminus T $. It then follows that $ \psi (1) = \rho (1) > \delta (1) $. Note that $ Y/T $ is abelian, hence every chief factor of every subgroup of $ Y/T $ has non-square order. Also $ T \leq M $ is solvable. 

By \cite[Theorem 6.22]{Isa06}, $ Y $ is a relative $ M $-group with respect to $ T $. We have that $ \psi = \lambda ^{Y} $, where $ \lambda \in \Irr(B) $, $ T < B \leqslant Y $, and $ \lambda _{T}=\delta $. We now show that $ B < Y $. Suppose that $ Y=B $. Whence $ \rho _{T} $ is irreducible. Then $ \rho _{T}= \chi _{T}=\delta $, which is a contradiction since $ \rho (1) > \delta (1) $ by the above argument. Hence $ B<Y $.

Let $ \mathcal{C}_{1} $ be the set that $ \psi $ vanishes on. It follows that $ Y\setminus B \subseteq \mathcal{C}_{1}\subseteq \mathcal{C}\subset M $. Thus $ Y\setminus B\subseteq M $, that is, $ M\cup B = Y $, a contradiction since $ M $ and $ B $ are proper subgroups of $ Y $. Thus $ \bf{C}_{\overline{M}}(\overline{x}) $ is trivial for any non-trivial $ p' $-element $ \overline{x} $ of $ \overline{G} $. Thus $ \bf{C}_{\overline{M}}({\overline{m}})\subseteq \overline{M} $ for all non-trivial $ \overline{m}\in \overline{M} $. By \cite[Theorem 6.7]{Isa08}, $ G/Z $ is a Frobenius group with kernel $ M/Z $.

Finally we show that $ M < G $. If $ \overline{M}=\overline{G} $, that is, if $ Z $ is a maximal normal subgroup of $ G $, then $ G/Z $ is simple and abelian. Hence $ G/Z $ is cyclic, that is, $ G $ is abelian, a contradiction since $ \chi $ is non-linear. Hence the result follows.
\end{proof}
Recall that $ G^{\infty} $ denotes the solvable residual of a group $ G $.
\begin{proposition}\label{nonabeliancase}
Let $ G $ be a finite non-solvable group. Let $ \chi $ be a faithful primitive irreducible character of $ G $ that vanishes on only one conjugacy class $ \mathcal{C} $. Let $ Z=Z(G) $. Suppose that $ M/Z $ is a non-abelian minimal normal subgroup of $ G/Z $. Then $ G/Z $ is almost simple and $ M $ is quasisimple. 
\end{proposition}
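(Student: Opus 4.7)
The plan is to first establish that $Z(M) = Z$ and $M$ is perfect, then to show $M$ is a central product of quasisimple groups indexed by the direct factors of $M/Z$, and finally to use the single-class-of-zeros hypothesis against the prime-power order conclusion of Lemma~\ref{uniqueminimal} to force exactly one factor.

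I would begin by noting that since $M \not\leq Z$, Lemma~\ref{uniqueminimal}(iii) forces $\chi_M$ to be irreducible; being faithful on $M$, it satisfies $Z(M) = Z(\chi_M) \leq Z(\chi) \cap M = Z$, so $Z(M) = Z$. The characteristic, hence $G$-normal, subgroup $M'$ must, again by Lemma~\ref{uniqueminimal}(iii), either contain $M$ or lie in $Z$; the latter would make $M/Z$ abelian, contradicting the hypothesis, so $M$ is perfect. Writing $M/Z \cong S^k$ with $S$ a non-abelian simple group, let $L_i$ be the preimage in $M$ of the $i$-th direct factor and set $K_i := L_i'$. Since $L_i/Z \cong S$ is perfect, $L_i = K_i Z$ and a short computation yields $K_i$ perfect; the inclusion $Z(K_i) \leq K_i \cap Z$ (forced by $K_i/(K_i \cap Z) \cong S$ having trivial center) then shows each $K_i$ is quasisimple. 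From $L_i \cap L_j = Z$ for $i \neq j$ we obtain $[K_i, K_j] \leq Z$, whence $[[K_i, K_j], K_i] \leq [Z, K_i] = 1$ and similarly $[[K_j, K_i], K_i] = 1$; the Three Subgroup Lemma combined with $K_i = [K_i, K_i]$ then yields $[K_i, K_j] = 1$. Using $M = M'$, one obtains $M = K_1 K_2 \cdots K_k$, a central product of quasisimple groups.

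Next I would apply the central-product character theory: $\chi_M$ factorises as $\chi_M(u_1 \cdots u_k) = \chi_1(u_1) \cdots \chi_k(u_k)$ with $\chi_i \in \Irr(K_i)$, and each $\chi_i$ is non-linear (otherwise $K_i = K_i' \leq \ker \chi_i$ would give $\chi_i = 1_{K_i}$ and so force $K_i \leq Z(\chi_M) = Z$, contradicting $K_i/Z(K_i) \cong S$). Assume for contradiction $k \geq 2$. By Burnside's theorem pick $x \in K_1$ with $\chi_1(x) = 0$; then $\chi(x) = 0$, so $x \in \mathcal{C}$ and, by Lemma~\ref{uniqueminimal}, $|x| = p^a$ for some prime $p$. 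By the Burnside $p^a q^b$ theorem, $|S|$ admits a prime divisor $q \neq p$, so $S$ contains an element of order $q$, which I lift to some $y \in K_2$. Then $\chi(xy) = \chi_1(x) \chi_2(y) \prod_{j \geq 3} \chi_j(1) = 0$, placing $xy \in \mathcal{C}$ and forcing $|xy|$ to be a $p$-power; however, the image of $xy$ in $M/Z \cong S^k$ has second coordinate of order $q$, so its order is divisible by $q$ and hence $q \mid |xy|$, a contradiction. Thus $k = 1$, $M/Z$ is simple, and $M = K_1$ is quasisimple.

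For the almost simple conclusion, $C_G(M/Z) \triangleleft G$ and $C_G(M/Z) \cap M \leq Z$ since $M/Z$ has trivial center. If $C_G(M/Z) > Z$, the uniqueness of $M/Z$ as the minimal normal subgroup of $G/Z$ from Lemma~\ref{uniqueminimal}(i) would force $M \leq C_G(M/Z)$, contradicting $C_G(M/Z) \cap M = Z$. Hence $C_G(M/Z) = Z$, and $G/Z$ embeds in $\Aut(M/Z)$ above $\Inn(M/Z) = M/Z$, so $G/Z$ is almost simple. The main obstacle I foresee is erecting the central-product structure via the Three Subgroup Lemma; once the pairwise commutation $[K_i, K_j] = 1$ is in place, the central-product character factorisation and the prime-power order obstruction combine cleanly to force $k = 1$.
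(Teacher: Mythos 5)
Your proof is correct, and for the crucial step it takes a genuinely different route from the paper. The opening moves coincide: both arguments show $M$ is perfect via Lemma~\ref{uniqueminimal}(iii), write $M/Z\cong S^{k}$, and derive almost simplicity for $k=1$ from the uniqueness of the minimal normal subgroup. The divergence is in how $k\geq 2$ is ruled out. The paper splits into three subcases ($Z=1$; $Z>1$ with every $T_i^{\infty}$ simple; $Z>1$ with some $T_i^{\infty}$ non-simple), and in the last of these it must invoke Blau's theorem (Lemma~\ref{quasisimplecharacterization}) to show the central generator is a commutator, then play Lemma~\ref{trivialcentralizersfornonzeros} against a $p'$-element of another factor. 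You instead erect the full central product $M=K_1\cdots K_k$ of quasisimple groups via the Three Subgroup Lemma, factor $\chi_M$ accordingly, and run a single argument: a zero $x$ of $\chi_1$ times a lift $y\in K_2$ of an element of order $q\neq p$ gives a zero $xy$ whose order is visibly not a $p$-power (here Burnside's $p^aq^b$ theorem guarantees $q$ exists). This is essentially the paper's $Z=1$ argument promoted to the general case; it buys uniformity, eliminates the case analysis on solvable residuals, and avoids the classification-dependent Lemma~\ref{quasisimplecharacterization} altogether, at the modest cost of setting up the central-product decomposition and its character theory. All the supporting claims you make (e.g.\ $Z(M)=Z$, $K_i=L_i'$ perfect and quasisimple, $[K_i,K_j]\leq L_i\cap L_j=Z$, $M=M'=K_1\cdots K_k$, and the well-definedness of the factorisation $\chi_M(u_1\cdots u_k)=\prod\chi_i(u_i)$) check out, as does your verification that $C_{G/Z}(M/Z)=1$ at the end, which the paper leaves implicit.
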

\begin{proof}
By Lemma \ref{uniqueminimal}, we may assume that $ Z $ is a $ p $-group and all elements in $ \mathcal{C} $ are $ p $-elements. We claim that $ M $ is perfect. If $ \chi _{M'} $ is reducible, then $ M'\leqslant Z $ by Lemma \ref{uniqueminimal}(iii). This implies that $ M $ is solvable, a contradiction. Hence $ \chi _{M'} $ is irreducible and it follows that $ M\leqslant M' $ using Lemma \ref{uniqueminimal}(iii). Now $ M $ is perfect as claimed. Since $ M/Z $ is a non-abelian chief factor of $ G $, we may write $ M/Z=T_{1}/Z\times T_{2}/Z\times \cdots \times T_{k}/Z $, where $ T_{i}/Z $ are isomorphic non-abelian simple groups.

Suppose that $ k=1 $. Note that $ M $ is quasisimple because $ M $ is perfect. Since $ M/Z $ is a unique minimal normal subgroup of $ G/Z $, it follows that $ G/Z $ is an almost simple group with socle $ M/Z $.

Now we assume that $ k\geq 2 $ and we will work for a contradiction.

Assume that $ Z=1 $. Then $ M=T_{1}\times T_{2}\times \cdots \times T_{k} $. Since $ \chi _{M} $ is irreducible, we have that $ \chi _{M}=\theta _{1}\times \theta _{2}\times \cdots \times \theta _{k} $ where $ \theta _{i}\in \Irr(T_{i}) $. Clearly $ \theta _{1} $ is nonlinear. Let $ a_{1}\in T_{1} $ be such that $ \theta _{1}(a_{1})=0 $, and let $ a_{2}\in T_{2} $ be a $ p' $-element. We have $ \chi (a_{1})=\chi (a_{1}a_{2})=0 $. This implies that $ a_{1}, a_{1}a_{2}\in \mathcal{C} $ are $ p $-elements, a contradiction.

Assume that $ Z > 1 $ and $ T_{i}^{\infty} $ are simple for all $ i $. Note that $ T_{i}/Z $ is simple, it follows that

\begin{center}
$ T_{i}= T_{i}^{\infty} Z = T_{i}^{\infty} \times Z $.
\end{center}
Then $ M=(T_{1}^{\infty}T_{2}^{\infty}\cdots T_{k}^{\infty})\times Z $ is not perfect, a contradiction.

Assume that $ Z > 1 $ and $ T_{i}^{\infty} $ is not simple for some $ i $. We may assume that $ T_{1}^{\infty} $ is not simple. Now $ T_{1}^{\infty} $ is quasisimple since $ T_{1}^{\infty} $ is perfect and 
\begin{center}
$ T_{1}^{\infty}/Z(T_{1}^{\infty})\cong T_{1}^{\infty}/(Z\cap T_{1}^{\infty})\cong T_{1}^{\infty}Z/Z=T_{1}/Z $
\end{center}
 is simple. Now $ Z(T_{1}^{\infty})=T_{1}^{\infty}\cap Z:=\langle z_{1}\rangle > 1 $, where $ z_{1} $ is a $ p $-element because $ Z $ is a cyclic $ p $-group. We claim that $ z_{1} $ is a commutator in $ T_{1}^{\infty} $. For if $ z_{1} $ is not a commutator, then using Lemma \ref{quasisimplecharacterization}, we have that $ T_{1}^{\infty} $ must be one of the groups in cases (i), (ii), (iii). Since $ z_{1} $ is of prime power order, this rules out cases (i) and (ii). But $ \langle z_{1}\rangle $ is cyclic so case (iii) does not hold, a contradiction. Thus by Lemma \ref{uniqueminimal}, $ z_{1}=[x, y] $ for some $ x,y\in T_{1}^{\infty} $. Also Lemma \ref{trivialcentralizersfornonzeros} yields that $ x, y\in \mathcal{C}\cap T_{1}^{\infty} $. Now let $ s_{2}\in T\setminus Z $ be a nontrivial $ p' $-element. Applying Lemma \ref{trivialcentralizersfornonzeros} again, we see that $ [x, s_{2}]=[y,s_{2}]=1 $. In particular, $ ys_{2} $ is not a $ p $-element and so $ ys_{2}\notin \mathcal{C} $. This also implies by Lemma \ref{trivialcentralizersfornonzeros} that $ [x, ys_{2}]=1 $. However, since $ [x, s_{2}]=[y,s_{2}]=1 $ we have $ [x,ys_{2}]=[x,y]=z_{1} $, and this leads to a contradiction that $ z_{1}=1 $. Now the proof is complete. 
\end{proof}

\begin{theorem}\label{generalreduction}
Let $ G $ be a finite non-solvable group. Suppose $ \chi \in \Irr (G) $ is primitive and vanishes on one conjugacy class $ \mathcal{C} $. Let $ K=\kernel \chi $, $ Z=Z(\chi ) $. Then there exists a normal subgroup $ M $ of $ G $ such that $ \mathcal{C}\subseteq M\setminus Z $ and $ M/Z $ is the unique minimal normal subgroup of the group $ G/Z $. Moreover, one of the following holds: 
\begin{itemize}
\item[(a)] $ G/Z $ is almost simple and $ M/K $ is quasisimple.
\item[(b)] $ G/Z $ is a  Frobenius group with an abelian kernel $ M/Z $ of order $ p^{2n} $, $ M/K $ is an extra-special $ p $-group and $ Z/K $ is of order $ p $ with $ K $ non-solvable.
\end{itemize}
\end{theorem}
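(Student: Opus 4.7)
The plan is to reduce the general situation to the faithful case already handled in Lemma \ref{uniqueminimal} and Propositions \ref{GKsolvable} and \ref{nonabeliancase} by passing to the quotient $\bar G := G/K$. Let $\bar\chi\in\Irr(\bar G)$ be the faithful irreducible character determined by $\bar\chi(gK)=\chi(g)$. Primitivity of $\chi$ transfers to $\bar\chi$: any factorisation $\bar\chi=\psi^{\bar G}$ with $\psi$ on a proper subgroup $H/K$ of $\bar G$ pulls back to an induced factorisation of $\chi$ from the proper subgroup $H$ of $G$, contradicting primitivity of $\chi$. Since $\chi$ is constant on $K$-cosets, $\mathcal C=\mathcal CK$ is a union of $K$-cosets, so its image $\bar{\mathcal C}$ in $\bar G$ is a single conjugacy class, and by construction $\bar{\mathcal C}$ is exactly the set of zeros of $\bar\chi$. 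Finally, because $\bar\chi$ is faithful, $Z(\bar\chi)=Z(\bar G)$; unwinding definitions gives $Z(\bar G)=Z(\chi)/K=Z/K$.

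Suppose first that $\bar G$ is non-solvable. Apply Lemma \ref{uniqueminimal}(i) to the pair $(\bar G,\bar\chi)$ to produce a normal subgroup $\bar M$ of $\bar G$ with $\bar{\mathcal C}\subseteq\bar M\setminus\bar Z$ and such that $\bar M/\bar Z$ is the unique minimal normal subgroup of $\bar G/\bar Z$. Writing $\bar M=M/K$ for a normal subgroup $M$ of $G$ containing $K$, the canonical isomorphisms $\bar G/\bar Z\cong G/Z$ and $\bar M/\bar Z\cong M/Z$ deliver the first assertion of the theorem for $G$. One then splits on whether $M/Z$ is abelian. If $M/Z$ is non-abelian, Proposition \ref{nonabeliancase} applied to $\bar G$ yields that $\bar G/\bar Z=G/Z$ is almost simple with socle $M/Z$ and that $\bar M=M/K$ is quasisimple, giving case (a). If $M/Z$ is abelian, Proposition \ref{GKsolvable} applied to $\bar G$ gives that $G/Z$ is a Frobenius group with abelian kernel $M/Z$ of order $p^{2n}$, that $M/K$ is extra-special of order $p^{2n+1}$, and that $Z/K$ has order $p$, giving case (b).

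Suppose instead that $\bar G$ is solvable. Then since $G$ itself is non-solvable, $K$ is necessarily non-solvable. The two propositions above no longer apply to $\bar G$, so the plan is to invoke Qian's classification \cite{Qia07} of finite solvable groups admitting a primitive irreducible character that vanishes on a unique conjugacy class. That classification produces precisely the Frobenius/extra-special configuration described in case (b) at the level of $\bar G$, and on pulling back to $G$ one recovers case (b) together with the extra piece of information that $K$ is non-solvable.

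The mathematical content of the argument is essentially contained in the previously proven results; the main obstacle is bookkeeping, namely verifying that primitivity of $\chi$, the uniqueness of the class of zeros, and the identifications of $Z$, $M$, and $\mathcal C$ all descend cleanly to $\bar G=G/K$. A subsidiary point is the clean division into the two cases $\bar G$ non-solvable (which is handled by Lemma \ref{uniqueminimal} together with Propositions \ref{GKsolvable} and \ref{nonabeliancase}) versus $\bar G$ solvable (which forces $K$ non-solvable and is handled by the solvable case of \cite{Qia07}); once this dichotomy is set up, the conclusion falls out directly in each branch.
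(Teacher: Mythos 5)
Your argument follows the paper's proof almost exactly: pass to $G/K$, check that primitivity, faithfulness, the single class of zeros and the identification $Z(G/K)=Z/K$ all descend (the paper cites \cite[Lemma 2.27(f)]{Isa06} for the last point), and then apply Proposition \ref{nonabeliancase} when $M/Z$ is non-abelian and Proposition \ref{GKsolvable} when $M/Z$ is abelian. The one place you diverge is the branch where $G/K$ is solvable: you decline to use Proposition \ref{GKsolvable} there on the grounds that its hypotheses (inherited from Lemma \ref{uniqueminimal}) require a non-solvable group, and you substitute Qian's classification \cite{Qia07}. The paper simply applies Proposition \ref{GKsolvable} in that case, and this is legitimate because the proofs of Lemma \ref{uniqueminimal} and Proposition \ref{GKsolvable} never actually use non-solvability of the ambient group -- only faithfulness, primitivity and non-linearity of $\chi$ -- so the detour through \cite{Qia07} is unnecessary. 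It is also slightly under-justified as written: Qian classifies solvable groups with an arbitrary (possibly imprimitive) irreducible character vanishing on one class, so you would still have to check which of his cases survive the primitivity assumption, which is precisely the work Proposition \ref{GKsolvable} already does. Finally, note that your sub-case ``$\bar G$ non-solvable and $M/Z$ abelian'' lands in conclusion (b) without yielding the clause ``$K$ non-solvable''; the paper's phrasing ``$G/K$ is solvable, that is, $M/Z$ is abelian'' tacitly identifies the two conditions, and if you keep them separate you should either show this sub-case cannot occur or drop that clause from your conclusion there.
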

\begin{proof}
Note that $ \mathcal{C} $ is a conjugacy class of $ G/K $, $ \chi $ is faithful on $ G/K $ and $ Z/K = Z(G/K) $ by  \cite[Lemma 2.27(f)]{Isa06}. Moreover, $ \chi \in \Irr(G/K) $ is primitive, faithful and vanishes on one conjugacy class $ \mathcal{C} $. If $ G/K $ is solvable, that is, $ M/Z $ is abelian, then by Proposition \ref{GKsolvable}, (b) holds. Otherwise $ G/K $ is non-solvable. Therefore the result follows from Proposition \ref{nonabeliancase} in the case where $ M/Z $ is non-abelian.
\end{proof}

\section{Quasisimple groups with a character vanishing on elements of the same order}
In this section we prove Theorem \ref{classificationp-powerorder}.
\subsection{Sporadic Groups}
Using the Atlas \cite{CCNPW85} we have the following result:
\begin{theorem}\label{sporadic}
Let $ M $ be a quasisimple group and $ p $ a prime. Suppose that $ M $ has a faithful irreducible character $ \chi $ such that: 
\begin{itemize}
\item[(i)] $ \chi $ vanishes on elements of the same $p$-power order;
\item[(ii)] the number of conjugacy classes that $\chi$ vanishes on is at most the size of the outer automorphism group of $M/Z(M)$;
\item[(iii)] if $ Z(M)\not= 1 $, then $ Z(M) $ is cyclic and of $ p $-power order.
\end{itemize}
Then $ M/Z(M) $ is not a sporadic simple group.
\end{theorem}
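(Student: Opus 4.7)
The plan is to handle Theorem \ref{sporadic} by a case-by-case inspection of the character tables of sporadic simple groups and their Schur covers, as recorded in \cite{CCNPW85}. Since there are only finitely many sporadic groups and their covers, this reduces to a finite (if lengthy) verification that no faithful character satisfies all three conditions simultaneously.

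First I would treat the case $Z(M)=1$, so $M=S$ is a sporadic simple group itself. By Theorem \ref{p-powerorderelements}, every non-linear $\chi\in\Irr(S)$ vanishes on some element of prime order; combined with condition (i), this fixes the prime $p$ uniquely as that prime order. Condition (ii) then restricts the number of vanishing classes to at most $|\Out(S)|\in\{1,2\}$, and all zero-classes of $\chi$ must consist of elements of $p$-power order. For each sporadic $S$ I would scan the ATLAS character table and discard any $\chi$ with either more than $|\Out(S)|$ vanishing classes, or whose vanishing classes contain elements of orders divisible by two distinct primes. For most sporadic groups this already eliminates every candidate, since the typical irreducible character of a sporadic group has many zeros distributed across classes of several different element orders.

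For the proper covers $M$ with $Z(M)\neq 1$, condition (iii) forces $Z(M)$ to be cyclic of $p$-power order, so only the $p$-parts of the Schur multiplier listed in \cite{CCNPW85} need be considered. For each such cover I would inspect the faithful characters of $M$ in the ATLAS (i.e., those not inflated from a quotient with smaller center) and check them against the three conditions. Here Lemma \ref{quasisimplecharacterization} offers additional leverage by controlling which non-trivial central elements can appear as commutators, which must happen by the last assertion of Lemma \ref{uniqueminimal}.

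The main obstacle will be bookkeeping rather than ideas: the largest sporadic groups (the Monster, Baby Monster, Fischer groups, Thompson group, etc.) have hundreds of irreducible characters over dozens of conjugacy classes. A streamlined preliminary pass --- recording, for each $\chi$, only the set of element orders that appear in its zero-classes --- lets one rule out virtually every character at a glance, leaving only a short list of low-degree or ``Steinberg-like'' characters to examine in detail. I would also cross-check the ATLAS data against the \textsf{GAP} character table library to catch any oversight, and so confirm that for every sporadic $S$ and every faithful $\chi\in\Irr(M)$ with $M/Z(M)\cong S$, at least one of (i), (ii), (iii) fails.
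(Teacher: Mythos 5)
Your proposal matches the paper's approach: the paper proves this theorem purely by inspection of the ATLAS character tables of the sporadic groups and their covers, exactly the finite case-by-case verification you describe (your side remark about Lemma \ref{uniqueminimal} is not needed and does not strictly apply in this abstract setting, but it is inessential to your argument). The verification you outline is sound and is precisely what the paper relies on.
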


\subsection{Alternating groups}
It is well known that every irreducible character of the symmetric group $ \SSS_{n} $ is characterized by a partition of $ n $ (see \cite{JK81}) and so can be identified by the corresponding partition. Thus $ \chi_{\lambda} \in \Irr (\SSS_{n}) $ (or simply $ \lambda $ if there is no confusion of what it means) is the irreducible character of $ \SSS_{n} $ corresponding to the partition $ \lambda $. The irreducible characters of the alternating group $ \A_{n} $ are obtained by restricting the $ \chi _{\lambda} $'s to $ \A_{n} $. In particular, the restriction $ (\chi_{\lambda})_{\A_{n}} $ is irreducible if and only if $ \lambda $ is not self-associated. We also identify elements of $ \SSS_{n} $ with their cycle type, for example, an $ (n-2) $-cycle with $ (n-2,1^{2}) $ or an $ (n-1) $-cycle with $ (n-1, 1) $.

Firstly we consider our problem when the center $ Z(M) $ is trivial, i.e, when $ M $ is an alternating group. We require the following results:
\begin{proposition}\label{ord6vanishingelement}
Let $ M= \A_{n} $ or $ \SSS_{n} $, $ n \geq 8 $, and let $ \chi \in \Irr (M) $. Then $ \chi (g)=0 $ for some $ g\in M $ of even order. Moreover, if $ \chi $ is of $ 2 $-power degree, we can choose $ g\in M $ such that $ \textsl{order}(g)= 4 $.
\end{proposition}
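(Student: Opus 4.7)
The plan is to parametrise $\chi$ by a partition $\lambda \vdash n$ and apply the Murnaghan--Nakayama (MN) rule at carefully chosen cycle types, reducing from $\A_n$ to $\SSS_n$ via the usual restriction trick. Any $\chi \in \Irr(\A_n)$ is either the restriction $(\chi_\lambda)_{\A_n}$ (when $\lambda \neq \lambda'$) or one of the two Galois-conjugate constituents of such a restriction (when $\lambda = \lambda'$); in either case, if $g \in \A_n$ lies in an $\SSS_n$-class that does not split in $\A_n$ and $\chi_\lambda(g) = 0$, then $\chi(g) = 0$. So it suffices to produce an even-order $g \in \A_n$ (or $\SSS_n$) on which $\chi_\lambda$ vanishes and whose cycle type contains an even part or a repeated part, to avoid the split classes.

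For the first claim, I would split on whether $\lambda$ is a hook. If $\lambda$ is not a hook, then by MN $\chi_\lambda$ vanishes on every $n$-cycle, since no rim $n$-hook can be removed from a non-hook shape. To get an even-order element inside $\A_n$, pick instead a long cycle type containing an even part: for instance $(n-2, 2)$ when $n$ is even, or $(n-3, 2, 1)$ when $n$ is odd. These have even order at least $6$ (using $n \geq 8$), sign $+1$, and do not split in $\A_n$; a short MN computation shows $\chi_\lambda$ vanishes on such $g$ unless $\lambda$ lies in an explicit short list, which is handled by direct inspection. If $\lambda = (n-k, 1^k)$ is a hook with $0 < k < n-1$, then $\chi_\lambda$ is the $k$-th exterior power of the standard representation of $\SSS_n$; its values on $g$ are elementary symmetric polynomials in the nontrivial eigenvalues of $g$, and from this formula one reads off a zero on a suitable even-order element of $\A_n$, for example of cycle type $(2^2,1^{n-4})$ or $(4,2,1^{n-6})$.

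For the second claim, assume $\chi_\lambda(1) = 2^m$ with $m \geq 1$. By the hook length formula, every odd prime-power divisor of $n!$ must appear among the hook lengths of $\lambda$, which is extremely restrictive. Using the $2$-core and $2$-quotient machinery (or, since the family of $2$-power-degree irreducibles of $\SSS_n$ for $n \geq 8$ is very small, direct enumeration), I would reduce to a short explicit list of $\lambda$. For each such $\lambda$ I would test elements of cycle type $(4^a, 2^b, 1^c)$ with $a \geq 1$: these have order exactly $4$ and lie in $\A_n$ precisely when $a + b$ is even. Applying MN by removing a rim $4$-hook, then rim $2$-hooks, and finally $1$-hooks, reduces the vanishing to a combinatorial incompatibility between $\lambda$ and the removal sequence, which can be verified case by case.

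The main obstacle is the $2$-power degree case: one must control all $\chi_\lambda$ of $2$-power degree for every $n \geq 8$ and then verify order-$4$ vanishing via MN. The generic (non-hook) part of the first claim is clean, but the hook subcase and the sign bookkeeping---needed both to guarantee $g \in \A_n$ and to ensure the chosen class does not split when $\lambda = \lambda'$---are where the real work lies.
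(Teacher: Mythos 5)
Your overall strategy (Murnaghan--Nakayama at chosen cycle types, plus the standard restriction/splitting argument to pass between $\SSS_n$ and $\A_n$) is the same one the paper uses for the second claim, but two of your steps are genuine gaps rather than deferred routine work. The most serious one is in the $2$-power-degree case: the reduction ``every $\chi_\lambda$ of $2$-power degree belongs to a short explicit list'' is not a consequence of the hook length formula being ``extremely restrictive,'' nor of $2$-core/$2$-quotient bookkeeping, and it cannot be settled by ``direct enumeration,'' since it is a statement about all $n\geq 8$ simultaneously. It is a theorem of Balog, Bessenrodt, Olsson and Ono (the paper cites \cite[Theorems 2.4 and 5.1]{BBOO01}), whose proof requires number-theoretic input on primes in short intervals; it gives $\lambda=(n-1,1)$ or $(2,1^{n-2})$ with $n=2^r+1$, up to finitely many small-$n$ exceptions (e.g.\ $(5,2,1)\vdash 8$ has degree $64$ and would also need to be handled). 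Without invoking this classification, your second part does not close. Once the list is available the verification is immediate: the paper just notes that $(4,2^{(n-5)/2},1)$ for $n\equiv 3\pmod 4$ and $(4^{2},2^{(n-9)/2},1)$ for $n\equiv 1\pmod 4$ are even permutations of order $4$ with exactly one fixed point, on which the standard character visibly vanishes.

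In the first claim your concrete witnesses also fail as stated: the hook character $\chi_{(n-1,1)}$ takes the value $n-5$ on $(2^2,1^{n-4})$ and $n-7$ on $(4,2,1^{n-6})$, both nonzero for $n\geq 8$, so ``one reads off a zero'' already breaks for the first hook. You would need to let the element depend on $k$ and on the parity of $n$ (to stay inside $\A_n$), and likewise the ``explicit short list'' of non-hook $\lambda$ admitting a rim $(n-2)$- or $(n-3)$-hook is a family growing with $n$, each member of which needs its own even-order zero. None of this is unfixable, but it is exactly where the content of the statement lies; the paper sidesteps it entirely by quoting the proof of \cite[Proposition 4.3]{LMS16} for the first claim and only doing explicit work in the (classified) $2$-power-degree case.
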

\begin{proof} The first part follows from the proof of \cite[Proposition 4.3]{LMS16}. Now suppose $ \chi $ is of $ 2 $-power degree. By \cite[Theorems 2.4 and 5.1]{BBOO01}, $ \chi _{\lambda}(1)=2^{r} $, where $ \lambda =(2^{r}, 1) $ or $ \lambda =(2, 1^{2^{r}-1}) $, and $ n=2^{r}+1 $. Now using the Murnaghan-Nakayama rule \cite[2.4.7]{JK81} we shall give the appropriate choices for $ g $. 

Either $ n\equiv 1 \pmod 4 $ or $ n\equiv 3\pmod 4 $. If $ n\equiv 3\pmod 4 $, then using the proof of \cite[Proposition 2.4]{DPSS09} we have $ g=(4, 2^{(n-5)/2}, 1) $. If $ n\equiv 1\pmod 4 $, then take $ g=(4^{2}, 2^{(n-9)/2}, 1) $.
\end{proof}

\begin{theorem}\label{a5}
Let $ M=\A_{n}$, $ n\geq 5 $. Suppose that $ M $ has a faithful irreducible character $ \chi $ such that: 
\begin{itemize}
\item[(i)] $ \chi $ vanishes on elements of the same $p$-power order;
\item[(ii)] the number of conjugacy classes that $\chi$ vanishes on is at most the size of the outer automorphism group of $M$;
\item[(iii)] if $ Z(M)\not= 1 $, then $ Z(M) $ is cyclic and of $ p $-power order.
\end{itemize}
Then $ M\cong \A_{5} $ or $ \A_{6} $.
\end{theorem}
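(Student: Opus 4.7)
Since $\A_n$ is simple for $n\geq 5$, $Z(M)=1$, so (iii) is vacuous; condition (ii) then bounds the number of vanishing $\A_n$-classes of $\chi$ by $|\Out(\A_n)|$, which equals $2$ for $n\geq 7$. The theorem permits $n=5,6$, so the task reduces to ruling out $n\geq 7$. The case $n=7$ I would dispatch by direct inspection of the character table of $\A_7$ in the Atlas: one goes through each of the nine irreducible characters and verifies that either the total number of vanishing $\A_7$-classes exceeds two, or the vanishing classes fail to share a common prime-power order.

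For $n\geq 8$ I would combine two of our earlier results. Proposition~\ref{ord6vanishingelement} yields a zero of even order, forcing $p=2$ by (i), so every zero is a $2$-element. Theorem~\ref{p-powerorderelements}, applied to the simple group $\A_n$, then supplies a zero of prime order, which must be an involution. It therefore suffices to exhibit a further vanishing class of non-$2$-power order, contradicting~(i). Writing $\chi$ as either $(\chi_\lambda)_{\A_n}$ or as one of its two constituents when $\lambda$ is self-associated, for a partition $\lambda\vdash n$ with $\lambda\neq(n),(1^n)$, the plan is to use the Murnaghan--Nakayama rule on a cycle type of non-$2$-power order such as $(3,2,2,1^{n-7})$ of order $6$. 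Because this cycle type has repeated $2$-parts it is unsplit in $\A_n$, so any zero of $\chi_\lambda$ there descends to a zero of $\chi$ regardless of whether $(\chi_\lambda)_{\A_n}$ splits; its sign $(-1)^{2}(-1)(-1)=1$ confirms it lies in $\A_n$.

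The hard part will be verifying the existence of such a vanishing class uniformly for every admissible $\lambda$. Hook partitions $\lambda=(n-k,1^k)$ I would handle via the exterior-power identification $\chi_{(n-k,1^k)}=\wedge^k(\mathrm{std})$: the zeros correspond exactly to permutations whose standard-representation eigenvalue multiset has vanishing $k$-th elementary symmetric polynomial, and for each $k$ one can exhibit such a permutation of order $6$ or higher non-prime-power order (for instance, $\chi_{(n-1,1)}(g) = \mathrm{fix}(g)-1$ vanishes on every element with one fixed point, and for $n\geq 8$ cycle types such as $(3,2^2,1^{n-7})$ supply such elements of composite order). For non-hook $\lambda$ the rim-hook calculus applied to cycle types like $(3,2^a,1^b)$ or $(5,3,1^b)$ produces zeros of non-$2$-power order by a case analysis on the number and placement of removable rim hooks of sizes $2$, $3$, and $5$ in $\lambda$.

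The principal combinatorial obstacle is to confirm that no non-trivial and non-sign partition $\lambda$ is simultaneously non-vanishing on \emph{every} cycle type mixing even and odd non-trivial parts --- equivalently, that no irreducible character of $\A_n$ (other than the trivial one) is non-vanishing on every odd-order element in the relevant range. Once this is established, the resulting non-$2$-power-order zero of $\chi$ contradicts~(i) and forces $n\leq 6$, completing the proof.
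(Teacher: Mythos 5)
There is a genuine gap at the decisive step. Your setup for $n\geq 8$ is sound and matches the paper's: Proposition~\ref{ord6vanishingelement} forces $p=2$, and Theorem~\ref{p-powerorderelements} then forces every zero of $\chi$ to be an involution, so it suffices to produce a single vanishing class of order $\neq 2$. But you then propose to find a zero of \emph{composite} order by a Murnaghan--Nakayama case analysis over all admissible partitions $\lambda$, and you yourself flag this as ``the hard part'' and ``the principal combinatorial obstacle'' without carrying it out. That unresolved claim --- essentially that no nontrivial irreducible character of $\A_n$ is nonzero on every element of non-$2$-power order --- is the entire content of the theorem in this range; asserting a plan for it is not a proof. (Your illustrative example is also off: $\chi_{(n-1,1)}$ vanishes on elements with exactly one fixed point, but the cycle type $(3,2^2,1^{n-7})$ has $n-7$ fixed points, so it only works for $n=8$.)

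The paper closes this gap by a different and much cheaper route, which you could have reached from exactly the point where your argument stalls. Since every zero must be an involution, split on the degree. If $\chi(1)$ is not a power of $2$, the Bessenrodt--Olsson theorem \cite[Theorem 1.2]{BO04} already yields a zero of odd prime order --- contradiction, with no combinatorics needed. If $\chi(1)$ is a power of $2$, the Balog--Bessenrodt--Olsson--Ono classification \cite[Theorems 2.4 and 5.1]{BBOO01} pins $\lambda$ down to $(2^r,1)$ or $(2,1^{2^r-1})$ with $n=2^r+1$, and for these two explicit partitions the second part of Proposition~\ref{ord6vanishingelement} exhibits a concrete vanishing element of order $4$ --- again a contradiction. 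So the uniform analysis over all $\lambda$ that you (rightly) identify as hard is avoided entirely by invoking the classification of prime-power-degree characters. Note also that the paper checks $n\leq 13$ from the Atlas rather than only $n=7$, to stay clear of the small-$n$ exceptions in the degree classification.
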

\begin{proof}
Using the Atlas \cite{CCNPW85} we infer that the only alternating groups with the desired property are $ \A_{5} $ and $ \A_{6} $ when $ n\leq 13 $. Suppose $ n > 13 $. 

We first consider when $ \chi $ vanishes on a $ 2 $-element. Suppose that $ \chi(1) $ is not a $ 2 $-power. Then by \cite[Theorem 1.2]{BO04}, $ \chi $ vanishes on some element of odd prime order, implying that $ \chi $ vanishes on at least two conjugacy classes of elements of distinct orders, a contradiction. Hence  $ \chi(1) $ is a $ 2 $-power. The result then follows by Proposition \ref{ord6vanishingelement} and Theorem \ref{p-powerorderelements}.

Now suppose that $ \chi $ vanishes on a $ 2' $-element. By Proposition \ref{ord6vanishingelement} we have that $ \chi $ vanishes on an element of even order. Hence $ \chi $ vanishes on at least two elements of distinct orders and the result follows.
\end{proof}

 We now consider our problem when $ Z(M) $ is non-trivial. We refer to \cite[Chapter 8]{HH92} for some basic results on ordinary representation theory of $ \tilde{\SSS}_{n} $ and $ \tilde{\A}_{n} $, where $ \tilde{\SSS}_{n} $  is any one of the two double covers of $ \SSS_{n} $ and $ \tilde{\A}_{n} $ is the double cover of $ \A_{n} $. The set of partitions of $ n $ into distinct parts is denoted by $ \mathcal{D} (n) $. Let $ \lambda \in \mathcal{D}(n) $. The partition $ \lambda $ is odd if the number of even parts in $ \lambda $ is odd, otherwise $ \lambda $ is even. The faithful irreducible characters of $ \tilde{\SSS}_{n} $ correspond to partitions $ \lambda \in \mathcal{D}(n) $. In particular, if $ \lambda $ is even, then $ \chi_{\lambda} \in \Irr (\tilde{\SSS}_{n})$ splits into distinct constituents, $ \chi ^{\pm}_{\lambda} $, when restricted to $ \tilde{\A}_{n} $ and if $ \lambda $ is odd, then the two irreducible characters $ \chi ^{\pm}_{\lambda}\in \Irr (\tilde{\SSS}_{n}) $ are the same when restricted to $ \tilde{\A}_{n} $.
 
 \begin{lemma}\label{psingularvanishingelement}
Let $ M=\tilde{\A}_{n} $ and suppose that $ n\geq p+4 $, where $ p $ is an odd prime. Suppose $ \chi \in \Irr (M) $ is faithful. Then $ \chi $ vanishes on some $ p $-singular element $ g $ of $ M $. 
\end{lemma}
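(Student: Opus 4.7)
The faithful irreducible characters of $M=\tilde{\A}_n$ are the spin characters (see \cite[Chapter 8]{HH92}), so $\chi$ is associated to some strict partition $\lambda \in \mathcal{D}(n)$: either as a single restricted character when $\lambda$ is odd, or as a constituent of a splitting pair $\chi_\lambda^\pm$ when $\lambda$ is even. On non-splitting $\tilde{\A}_n$-classes the value of $\chi$ is recovered (up to a factor of $1/2$ in the splitting case) from the value of $\chi_\lambda^\pm\in\Irr(\tilde{\SSS}_n)$, which can be computed via the Morris bar rule. My plan is therefore to exhibit, for each $\lambda \in \mathcal{D}(n)$, a $p$-singular element $g$ in a non-splitting class at which $\chi_\lambda^\pm$ vanishes, with a case split on whether the shifted diagram of $\lambda$ contains a removable bar of length $p$.

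If $\lambda$ has no removable $p$-bar, I take $g$ to be a lift to $\tilde{\A}_n$ of a $p$-cycle with cycle type $(p,1^{n-p})$. Since $p$ is odd the $p$-cycle is even, so it lies in $\A_n$; its lift has order $p$ or $2p$, and is in particular $p$-singular. The Morris bar rule expresses $\chi_\lambda^\pm(p,1^{n-p})$ as a signed sum indexed by removable $p$-bars in $\lambda$; by assumption this sum is empty, so $\chi(g)=0$. If instead $\lambda$ does admit a removable $p$-bar, I switch to an element of cycle type $(p,2,2,1^{n-p-4})$, which is defined because $n\geqslant p+4$ and lies in $\A_n$ since the $p$-cycle (even) and the pair of transpositions (together even) each contribute even parity; its order is $2p$, so any lift to $\tilde{\A}_n$ is $p$-singular. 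The class has repeated parts, so it is non-splitting in $\tilde{\A}_n$, and vanishing of $\chi$ on it is to be verified by applying the Morris bar rule, which reduces the computation to a signed sum over bar tableaux of shape $\lambda$ with content $(p,2,2,1^{n-p-4})$ and appropriate $\sqrt{2}$ coefficients coming from the two even parts.

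The main obstacle is the combinatorial verification in the second case: unlike the first case where vanishing is structural, here the bar-tableau sum is not automatically empty, and one must carefully track signs and coefficients to show cancellation for every strict partition $\lambda$ of $n$ possessing a removable $p$-bar. A natural fallback, should a uniform argument prove too intricate, is to refine the choice of cycle type depending on $\lambda$: for example, replacing $(p,2,2,1^{n-p-4})$ by $(p,2r,1^{n-p-2r})$ for $2r$ exceeding the largest bar length available in the $p$-bar reduction of $\lambda$ (which forces the Morris sum to be empty), or by a class of the form $(p,q,1^{n-p-q})$ with $q$ a small odd prime (chosen to keep the element in $\A_n$). The hypothesis $n\geqslant p+4$ provides just enough room to manoeuvre such alternatives while preserving the parity requirement for membership in $\A_n$.
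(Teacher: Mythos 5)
There is a genuine gap: your second case (when $\lambda$ admits a removable $p$-bar) is never actually proved. You state that the vanishing of $\chi$ on the class of type $(p,2,2,1^{n-p-4})$ ``is to be verified by applying the Morris bar rule'' and then concede that this cancellation is the main obstacle, offering only speculative fallbacks. As it stands this is a proof of one case and a plan for the other, and the plan rests on the wrong tool: the Morris recursion computes spin character values by stripping \emph{odd} parts from a cycle type of odd type, so there is no ``bar tableau sum with content $(p,2,2,1^{n-p-4})$'' to cancel. What actually kills this class is the structural vanishing theorem for spin characters (Schur; \cite[Theorem 8.7]{HH92}): for $\lambda\in\mathcal{D}(n)$ the characters $\chi^{\pm}_{\lambda}$ of $\tilde{\SSS}_{n}$ vanish on every class whose cycle type is neither all-odd nor equal to $\lambda$. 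The type $\mu=(p,2,2,1^{n-p-4})$ has even parts and repeated parts, so $\mu\notin\mathcal{O}(n)$ and $\mu\neq\lambda$, and the value is zero with no combinatorics at all. This single element in fact works for \emph{every} $\lambda\in\mathcal{D}(n)$, which makes your case division on removable $p$-bars unnecessary; your first case (empty bar sum at $(p,1^{n-p})$) is correct but redundant.

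The division that does matter, and which you do not address, is whether $\lambda$ is odd or even, because it governs how $\chi\in\Irr(\tilde{\A}_{n})$ relates to $\chi^{\pm}_{\lambda}\in\Irr(\tilde{\SSS}_{n})$. For $\lambda$ odd the restriction to $\tilde{\A}_{n}$ is irreducible and the vanishing theorem applies verbatim; this is exactly how the paper argues. For $\lambda$ even the restriction splits into two constituents, and one must check that the two constituents do not take opposite nonzero values on the class in question (your parenthetical ``up to a factor of $1/2$'' quietly assumes this); the paper disposes of this case by invoking the proof of \cite[Theorem 4.5]{LMS16} rather than re-deriving it. To repair your write-up: replace the bar-rule cancellation in case two by the citation of \cite[Theorem 8.7]{HH92}, and add the argument (or a reference) showing that for even $\lambda$ the two $\tilde{\A}_{n}$-constituents agree on the class of $\mu$, e.g.\ because their difference character is supported on classes of cycle type $\lambda$.
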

\begin{proof}
If $ \lambda \in \mathcal{D}(n) $ is even, then the result holds by the proof of \cite[Theorem 4.5]{LMS16}. Now suppose that $ \lambda \in \mathcal{D}(n) $ is odd. Then the characters $ \chi ^{\pm}_{\lambda}\in \Irr (\tilde{\SSS}_{n}) $ are the same and irreducible when restricted to $ \tilde{\A}_{n} $. Let $ g\in \tilde{\A}_{n} $ be an element which projects to a cycle type $ \mu = (p)(2)^{2}(1)^{n-p-4} $. Then $ \lambda \neq \mu $ and so by \cite[Theorem 8.7(i)]{HH92}, $ \chi ^{\pm}_{\lambda}(g) =0 $.
\end{proof}
\begin{theorem}\label{2a5}
Let $ M $ be a quasisimple group such that $ M/Z(M)\cong \A_{n} $, $ n\geq 5 $ and $ Z(M)\neq 1 $. Suppose that $ M $ has a faithful irreducible character $ \chi $ such that: 
\begin{itemize}
\item[(i)] $ \chi $ vanishes on elements of the same $p$-power order;
\item[(ii)] the number of conjugacy classes that $\chi$ vanishes on is at most the size of the outer automorphism group of $M/Z(M)$;
\item[(iii)] if $ Z(M)\not= 1 $, then $ Z(M) $ is cyclic and of $ p $-power order.
\end{itemize}
Then $ M\cong 2{\cdot} \A_{5} $ with $ p=2 $ or $ 3{\cdot} \A_{6} $ with $ p=3 $.
\end{theorem}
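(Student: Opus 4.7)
The plan is to first apply the classification of Schur multipliers of alternating groups to narrow the possibilities for $M$, then handle the large-$n$ family by Lemma \ref{psingularvanishingelement}, and finally dispatch the remaining small cases by direct inspection of the Atlas.

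For $n \geq 5$ the Schur multiplier of $\A_n$ is $\Z/2$ except when $n \in \{6,7\}$, where it is $\Z/6$. Since condition (iii) forces $Z(M)$ to be cyclic of prime power order, the nontrivial quasisimple covers that survive are $2{\cdot}\A_n$ (with $p=2$) for $n \geq 5$, $3{\cdot}\A_6$ (with $p=3$), and $3{\cdot}\A_7$ (with $p=3$); the extensions $6{\cdot}\A_6$ and $6{\cdot}\A_7$ are excluded because $|Z|=6$ is not a prime power.

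Next I would eliminate $M = 2{\cdot}\A_n$ for $n \geq 7$. Here $p = 2$, so by (i) every zero of $\chi$ must be a $2$-element. But Lemma \ref{psingularvanishingelement} applied with the odd prime $3$---which is legal since $n \geq 7 = 3+4$---produces a $3$-singular element $g \in M$ on which $\chi$ vanishes. Since the order of $g$ is divisible by $3$, the element $g$ is not a $2$-element, a contradiction. This leaves only $n \in \{5,6\}$ for this family.

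The four remaining candidates $2{\cdot}\A_5$, $2{\cdot}\A_6$, $3{\cdot}\A_6$, and $3{\cdot}\A_7$ are each small enough to treat by direct computation in the Atlas \cite{CCNPW85}. For $2{\cdot}\A_5 \cong \SL_2(5)$ the faithful irreducible characters of degrees $2$ and $4$ each vanish only on the unique class of elements of order $4$, so conditions (i) and (ii) hold with $p=2$ and this case survives. For $3{\cdot}\A_6$ the faithful character of degree $9$ vanishes only on elements of order $3$, and the number of such $M$-classes is at most $|\Out(\A_6)| = 4$, so this case also survives. For $2{\cdot}\A_6$ and $3{\cdot}\A_7$ one checks character-by-character that the set of vanishing classes of every faithful irreducible character contains elements of at least two distinct orders, so condition (i) fails. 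The main obstacle is the Atlas bookkeeping in these last two cases, where one must enumerate every faithful irreducible character and inspect its zero set; however, this is a finite and essentially mechanical verification, while the conceptual content is carried by the Schur-multiplier reduction and the Lemma \ref{psingularvanishingelement} step, which together shrink an a priori infinite problem to a short Atlas check.
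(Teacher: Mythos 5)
Your proposal is correct, and it rests on the same two pillars as the paper's proof: Lemma \ref{psingularvanishingelement} for the infinite family and an Atlas check for the residual small cases. The organization differs in a way worth noting. The paper checks the Atlas for all covers with $n\leq 13$ and only invokes Lemma \ref{psingularvanishingelement} for $n\geq 14$, applying it with both $p=3$ and $p=5$ so that the zeros of $\chi$ cannot all share a single prime-power order; the Schur multiplier fact is used only implicitly there (to know that $M=\tilde{\A}_n$ for large $n$). You instead make the Schur-multiplier reduction explicit up front, and then exploit the linkage between conditions (i) and (iii) --- namely that $Z(M)\cong \Z/2$ forces $p=2$, so a single application of the lemma with the odd prime $3$ already kills $2{\cdot}\A_n$ for all $n\geq 7$ (the lemma's hypothesis $n\geq p+4$ is met exactly from $n=7$ on). This shrinks the Atlas verification from all covers with $n\leq 13$ down to just $2{\cdot}\A_5$, $2{\cdot}\A_6$, $3{\cdot}\A_6$ and $3{\cdot}\A_7$, which is a genuine economy; the price is that you lean on the validity of Lemma \ref{psingularvanishingelement} at its boundary cases $n=7,8$ rather than deep in the stable range, and on the (correct, and consistent with how the paper uses Problem \ref{classifyp-powerorder}) reading that the prime $p$ in (i) and (iii) is the same. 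Your spot checks of the surviving cases ($\SL_2(5)$ in degrees $2$ and $4$ vanishing only on the class of order $4$; $3{\cdot}\A_6$ in degree $9$) and the failures ($2{\cdot}\A_6$, $3{\cdot}\A_7$) match what the character tables give.
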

\begin{proof}
Checking in the Atlas \cite{CCNPW85} we see that the result is true when $ n\leq 13 $. Let $ n\geq 14 $. Now $ \chi $ vanishes on an element whose order is not a $ p $-power since $ \chi $ vanishes on a $ 3 $-singular element and a $ 5 $-singular element by Lemma \ref{psingularvanishingelement}. The result follows by Theorem \ref{p-powerorderelements}.
\end{proof}
\subsection{Groups of Lie type}
We consider $ \SL_{2}(q), $ $ q=p^{n} $, where $ p $ is prime and $ n $ a positive integer. The character tables of $ \SL_{2}(q) $ are found in \cite[Theorems 38.1 and 38.2]{Dor71}. Since $ \SL_{2}(4)\cong \PSL_{2}(5)\cong \A_{5} $, $ \SL_{2}(5)\cong {2\cdot} \A_{5} $ and $ \A_{6}\cong \PSL_{2}(9) $, we will not consider these cases here. The size of the outer automorphism group of finite groups of Lie type can be found in the Atlas \cite[Chapter 3]{CCNPW85}. In particular, $ |\Out(\PSL_{2}(q))|= \gcd(2, q-1){\cdot} f $ where $ q=p^{f} $, $ p $ a prime and $ f $ a positive integer. The easy result below, whose proof we shall omit, will come in handy:
\begin{lemma}\label{outerlessthanconjugacyclasses}
Let $ q=p^{f} $ where $ p \geqslant 3 $ is prime and $ f $ is a positive integer such that $ q > 32 $. Then $ 2f + 1 < (q-3)/4 $.
\end{lemma}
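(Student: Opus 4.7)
The plan is to rewrite the target inequality $2f+1<(q-3)/4$ as $q>8f+7$, and then to establish $p^{f}>8f+7$ under the hypotheses $p\geq 3$ prime and $q=p^{f}>32$. I would split the argument according to the size of $f$, using the hypothesis $q>32$ to force $p$ to be suitably large whenever $f$ is small, and relying on the exponential growth of $p^{f}$ to handle large $f$.

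For $f\in\{1,2,3\}$ the hypothesis $q>32$ already gives the result after a direct check. When $f=1$ we need $p>32$, so $p\geq 37$ and $37>8\cdot 1+7=15$. When $f=2$ the condition $p^{2}>32$ together with $p\geq 3$ prime forces $p\geq 7$, and therefore $q\geq 49>8\cdot 2+7=23$. When $f=3$ the condition $p^{3}>32$ forces $p\geq 5$ (since $3^{3}=27$), so $q\geq 125>8\cdot 3+7=31$. Each of these is a single arithmetic comparison.

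For $f\geq 4$ I would note $q=p^{f}\geq 3^{f}$ and argue by induction on $f$ that $3^{f}>8f+7$. The base case $f=4$ reads $81>39$, which is clear. For the induction step, from $3^{f}>8f+7$ one obtains $3^{f+1}>3(8f+7)=24f+21$, and the inequality $24f+21>8(f+1)+7=8f+15$ reduces to $16f+6>0$, which holds trivially.

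There is no genuine obstacle here: the statement is a routine numerical inequality, which is presumably why the authors elect to omit the proof. The only care needed is in correctly identifying, for each of the three small values of $f$, the smallest admissible odd prime $p$ compelled by $q>32$, and in setting up a base case for the induction that leaves enough slack for the inductive step to go through.
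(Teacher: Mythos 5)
Your proof is correct and complete: the reduction of $2f+1<(q-3)/4$ to $q>8f+7$ is right, the minimal admissible primes for $f=1,2,3$ under $q>32$ are correctly identified ($p\geqslant 37$, $p\geqslant 7$, $p\geqslant 5$ respectively), and the induction showing $3^{f}>8f+7$ for $f\geqslant 4$ goes through. The paper explicitly omits the proof of this lemma as routine, so there is no argument to compare against; your case split plus induction is exactly the sort of elementary verification the author is leaving to the reader.
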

\begin{proposition} \label{L2p-power}
Let $ M $ be a quasisimple group such that $ M/Z(M)= \PSL_{2}(q) $, $ q=p^{n} $, where $ p $ is prime, $ n $ is a positive integer, $ q\geqslant 7 $ and $ q\neq 9 $. Suppose that $ M $ has a faithful irreducible character $ \chi $ such that: 
\begin{itemize}
\item[(i)] $ \chi $ vanishes on elements of the same $p$-power order;
\item[(ii)] the number of conjugacy classes that $\chi$ vanishes on is at most the size of the outer automorphism group of $M/Z(M)$;
\item[(iii)] if $ Z(M)\not= 1 $, then $ Z(M) $ is cyclic and of $ p $-power order.
\end{itemize}
Then $ M $ is one of the following:
\begin{itemize}
\item[(1)] $ M= \PSL_{2}(7) $, $ \chi(1)=3 $;
\item[(2)] $ M= \PSL_{2}(8) $, $ \chi (1)=7 $;
\item[(3)] $ M= \PSL_{2}(11) $, $ \chi(1)=5 $ or $ \chi(1)=10 $;
\item[(4)] $ M= \PSL_{2}(q)$, $ \chi(1)=q $.
\end{itemize}
\end{proposition}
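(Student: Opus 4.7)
The plan is to enumerate the faithful irreducible characters of $M$ using the explicit character tables of $\SL_{2}(q)$ and $\PSL_{2}(q)$ from \cite[Theorems 38.1, 38.2]{Dor71}, and test conditions (i)--(iii) against each family. First I would observe that, under the hypotheses $q\geqslant 7$ and $q\neq 9$, the Schur multiplier of $\PSL_{2}(q)$ equals $\Z_{\gcd(2,q-1)}$, so $M$ is either $\PSL_{2}(q)$ or, when $q$ is odd, $\SL_{2}(q)$. In the latter case $|Z(M)|=2$, so condition (iii) forces $p=2$: every zero of $\chi$ must be a $2$-element. Write $q=p_{0}^{f}$ for the defining characteristic; then $|\Out(\PSL_{2}(q))|=\gcd(2,q-1)\cdot f$.

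Next I would examine each family of faithful irreducibles. The Steinberg character of degree $q$ vanishes exactly on the non-identity unipotent classes --- one class when $q$ is even, two classes when $q$ is odd in $\PSL_{2}(q)$. In both subcases the number of vanishing classes is at most $|\Out(\PSL_{2}(q))|$ and the zeros share the common order $p_{0}$, so we recover case (4) with $M=\PSL_{2}(q)$; for $M=\SL_{2}(q)$ with $q$ odd the Steinberg vanishes on elements of orders $p_{0}$ and $2p_{0}$, violating (i). The principal series characters of degree $q+1$ vanish identically on every conjugacy class of non-split-torus elements, the cuspidal characters of degree $q-1$ vanish identically on every split-torus class, and the exceptional characters of degree $(q\pm 1)/2$ (present only when $q$ is odd) vanish on roughly half of one of these two families of semisimple classes.

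For $q>32$, Lemma \ref{outerlessthanconjugacyclasses} gives $(q-3)/4>2f+1\geqslant|\Out(\PSL_{2}(q))|$, so the number of vanishing classes of every non-Steinberg character already exceeds $|\Out|$ and condition (ii) fails; hence only case (4) survives for large $q$. The finitely many remaining values $7\leqslant q\leqslant 32$ with $q\neq 9$ can be checked individually using the Atlas \cite{CCNPW85} and explicit character-table computation. This step produces the three additional exceptional cases (1)--(3) and confirms that no other faithful characters in this range satisfy all of (i)--(iii).

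The main obstacle will be the careful treatment of the exceptional characters of degree $(q\pm 1)/2$ together with the additional faithful characters of $\SL_{2}(q)$ that do not descend to $\PSL_{2}(q)$: the former take values involving square roots of $\pm q$ and vanish on a subtle subset of semisimple classes whose exact cardinality depends on $q\imod{4}$, while for the latter one must verify case by case --- using that $p=2$ is forced --- that either a zero of non-$2$-power order appears or the count of vanishing classes already exceeds $|\Out|$. In each such instance the Dornhoff tables provide closed-form character values, and the argument reduces to comparing the number of relevant torus classes to $\gcd(2,q-1)\cdot f$.
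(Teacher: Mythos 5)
Your proposal follows essentially the same route as the paper: reduce to $M\in\{\PSL_2(q),\SL_2(q)\}$ via the Schur multiplier, run through the Dornhoff character tables counting vanishing classes against $|\Out(\PSL_2(q))|=\gcd(2,q-1)\cdot f$ using Lemma \ref{outerlessthanconjugacyclasses} for $q>32$, let only the Steinberg character survive, and settle $7\leqslant q\leqslant 32$ from the Atlas. The only (harmless) quibble is that the degree-$q$ Steinberg character of $\SL_2(q)$, $q$ odd, is trivial on the centre and hence not faithful, so it need not be excluded via condition (i); the paper simply omits it from the list of faithful characters of $\SL_2(q)$ and disposes of the remaining families by the class count alone, without needing your observation that (iii) forces $p=2$.
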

\begin{proof}
If we consider $ \PSL_{2}(q) $, $ 7\leqslant q \leqslant 32 $, then  (1)-(4) follow from the character tables in the Atlas \cite{CCNPW85}. We may consider $ q > 32 $. We first suppose that $ q $ is odd. The character table of $ \SL_{2}(q) $ is found in \cite[Theorem 38.1]{Dor71}. The faithful characters of $ M $ are the ones labelled $ \chi _{i} $ when $ i $ is odd, $ \theta _{j} $ when $ j $ is odd, $ \xi _{1} $ and $ \xi _{2} $ when $ q\equiv 3\pmod 4 $ ($ \varepsilon = -1 $), and $ \eta _{1} $ and $ \eta _{2} $ when $ q \equiv 1 \pmod 4 $ ($ \varepsilon = 1 $). This is because $ \chi _{i}(z)=(-1)^{i}(q + 1)=-(q+1) $ and $ \chi _{i}(1)=q + 1 $, $ \theta _{j}(z)=(-1)^{j}(q - 1)=-1(q-1) $ and $ \theta _{j}(1)=q - 1 $, $ \xi _{1}(z)=\xi _{2}(z)=\frac{1}{2}\varepsilon (q + 1)=-\frac{1}{2}(q + 1) $ and $ \xi _{1}(1)=\xi _{2}(1)=\frac{1}{2}(q + 1) $, $ \eta _{1}(z)=\eta _{2}(z)=-\varepsilon \frac{1}{2}(q - 1)=-\frac{1}{2}(q - 1) $ and $ \eta _{1}(1)=\eta _{2}(1)=\frac{1}{2}(q - 1) $.

Let $ \chi \in \{ \chi _{i}~|~ i $ is odd$ \} $. Then $ \chi $ vanishes on $ (q - 1)/2 $ conjugacy classes of elements represented by $ b^{m} $, $ 1 \leqslant m \leqslant (q-1)/2 $. Hence $ \chi $ vanishes on more than $ 2f $ conjugacy classes by Lemma \ref{outerlessthanconjugacyclasses}. Since the size of the outer automorphism group of $ M/Z(M)=\PSL_{2}(q) $ is $ 2f $, $ \chi $ does not satisfy hypothesis (ii) of the statement of our proposition.
 
Let $ \chi \in \{ \theta _{j}~|~ j $ is odd$ \} $. Then $ \chi $ vanishes on $ (q-3)/2 $ conjugacy classes of elements represented by $ a^{l} $, $ 1\leqslant l\leqslant (q-3)/2 $. By the argument above $ \chi $ is not a suitable character.

Now suppose $ \chi \in \{ \xi _{i} ~|~ i=1, 2 $ and $ q \equiv 3\pmod 4 \}$. Then $ \varepsilon = -1 $ and $ \chi $ vanishes on $ (q-1)/2 $ conjugacy classes and the result follows.

Lastly, let $ \chi \in \{\eta _{i} ~|~ i=1, 2 $ and $ q \equiv 1 \pmod 4 \}$. Then $ \varepsilon = 1 $ and $ \chi $ vanishes on $ (q-3)/2 $ conjugacy classes. Again the result then follows.

Now let $ M=\PSL_{2}(q) $ with $ q $ odd. The character tables of $ \PSL_{2}(q) $ are exhibited in \cite{Ada02}. Nevertheless we continue using the notation in \cite[Theorem 38.1]{Dor71}. The faithful characters of $ M $ are those labelled $ \phi $, $ \theta _{j} $ when $ j $ is even, $ \chi _{i} $ when $ i $ is even, $ \xi _{1} $ and $ \xi _{2} $ when $ q \equiv 1 \pmod 4 $ ($ \varepsilon =1 $), and $ \eta _{1} $ and $ \eta _{2} $ when $ q \equiv 3 \pmod 4 $ ($\varepsilon =-1 $). This is because $ \phi (z)=\phi (1) = p $, $ \theta _{j} (z)=\theta (j)(1) = q -1 $, $ \chi _{i}(z)=\chi _{i}(1)=q+1 $, $ \xi _{1}(z)=\xi _{2}(z)=\xi _{1}(1)=\xi _{2}(1)=\frac{1}{2}(q + 1) $ and $ \eta _{1}(z)= \eta _{2}(z)=\eta _{1}(1)=\eta _{2}(1)=\frac{1}{2}(q-1) $.

Let us consider $ \phi $, the Steinberg character of $ M $. Then $ \phi $ vanishes on two conjugacy classes represented by $ c $ and $ d $, both of order $ p $. Now (4) of the statement of the theorem follows because the size of the outer automorphism group of $ M $ is $ 2f $.

Consider $ \chi \in \{\chi _{i} ~|~ i $ is even$ \} $. Then $ \chi $ vanishes on $ (q-1)/4 $ conjugacy classes if $ q \equiv 1\pmod 4 $ and $ \chi $ vanishes on $ (q-7)/4 + 1 = (q-3)/4 $ conjugacy classes if $ q\equiv 3\pmod 4 $ by \cite{Ada02}. Since the size of the outer automorphism group of $ M $ is $ 2f $, $ \chi $ does not satisfy hypothesis (ii) of the statement of our proposition by Lemma \ref{outerlessthanconjugacyclasses}. 

Now consider $ \chi \in \{ \theta _{j} ~|~ j $ is even$ \} $. Then $ \chi $ vanishes on $ (q-1)/4 $ conjugacy classes when $ q \equiv 1\pmod 4 $ and $ \chi $ vanishes on $ (q-3)/4 $ conjugacy classes when $ q\equiv 3\pmod 4 $, again by \cite{Ada02}. In both cases $ \chi $ vanishes on more than $ 2f $ conjugacy classes and we are done.

Let $ \chi \in \{\xi _{i} ~|~ i=1, 2 $, and $ q\equiv 1\pmod 4\} $. Then $ \varepsilon = 1 $ and $ \chi $ vanishes on $ (q-1)/4 $ conjugacy classes so $ \chi $ vanishes on more than $ 2f $ conjugacy classes by Lemma \ref{outerlessthanconjugacyclasses} and the result follows.

Now take $ \chi \in \{\eta _{i} ~|~ i=1, 2 $ and $ q \equiv 3\pmod 4\} $. Such a $ \chi $ vanishes on $ (q-3)/4 $ conjugacy classes represented by $ a ^{l} $ and so fails satisfy the hypothesis by the above argument.

Finally we now consider $ \SL_{2}(q) $ where $ q $ is even. Its character table is exhibited in \cite[Theorem 38.2]{Dor71}. Note that since $ \gcd(2, q-1)=1 $, $ M=\SL_{2}(q)=\PSL_{2}(q) $. Now (2) follows from the Atlas \cite{CCNPW85}. We may assume that $ q\geqslant 32 $. We consider first the Steinberg character $ \phi $ of $ \PSL_{2}(q) $. Then $ \phi $ vanishes on one conjugacy class $ c $. Hence (4) follows. Consider $ \chi _{i} $, $ 1\leqslant i\leqslant (q-2)/2 $. Then $ \chi _{i} $ vanishes on elements of the form $ b^{m} $, $ 1\leqslant m \leqslant q/2 $. Hence $ \chi $ vanishes on $ q/2 $ conjugacy classes. Also $ \theta_{j} $ vanishes on at least $ (q-2)/2 $ conjugacy classes. Clearly the number of conjugacy classes is more than the size of the outer automorphism group of $ M $ in all these cases. Hence the result follows.

\end{proof}
\section{Non-solvable groups with a character vanishing on one class} 
We begin this section by showing the primitivity of the characters in Propositions \ref{a5toa13} and \ref{overallclassificationLietype}.  Imprimitive  characters for quasisimple groups were considered by Hiss, Husen and Magaard in \cite{HHM15}. This description could be used to determine which characters are primitive, for at least the cases where $ G=M $. However, here we use a different approach.

If an irreducible character $ \chi $ of $ G $ is imprimitive then $ \chi = \theta ^{G} $ for some $ \theta \in \Irr(H) $ where $ H $ is a proper subgroup of $ G $. By transitivity of induction of characters, we may assume that $ H $ is a maximal subgroup of $ G $. Hence if $ \chi $ vanishes on one conjugacy class, $ G $ should be one of the groups given in the result below:
\begin{theorem}\cite[Theorem 1.6]{BT-V15}\label{imprimitive}
Let $H$ be a maximal subgroup of a finite group $G$ such that for some nonlinear imprimitive irreducible character $\chi = \theta^G$ with $\theta \in {\Irr}(H)$. Let $N=H_G$. If $ \chi $ vanishes on one conjugacy class, then one of the following holds:
\begin{itemize}\addtolength{\itemsep}{0.2\baselineskip}
\item[{\rm (i)}] $G$ is a Frobenius group with an abelian odd-order kernel $H=G'$ of index $ 2 $;
\item[{\rm (ii)}] $G/N$ is a $2$-transitive Frobenius group with an elementary abelian kernel $M/N$ of order $p^n$ for some prime $p$ and integer $n\geqs 1$, and a complement $H/N$ of
order $p^n-1$. Moreover, $M'=N$ and one of the following holds:

\vspace{1mm}

\begin{itemize}\addtolength{\itemsep}{0.2\baselineskip}
\item[{\rm (a)}] $M$ is a Frobenius group with kernel $M'$ and $p^n=p>2$;
\item[{\rm (b)}] $M$ is a Frobenius group with kernel $K\triangleleft G$ such that $G/K\cong \SL_2(3)$ and $M/K\cong \rm{Q}_8$;
\item[{\rm (c)}] $M$ is a Camina $p$-group;
\end{itemize}

\item[{\rm (iii)}] $G/N\cong \PSL_{2}(8){:}3$, $H/N\cong \rm{D}_{18}{:}3$, $N$ is a nilpotent $7'$-group;

\item[{\rm (iv)}] $G/N\cong \A_{5}$, $H/N\cong \rm{D}_{10}$, $N$ is a $2$-group.
\end{itemize}
\end{theorem}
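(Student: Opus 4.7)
The plan is to exploit the induced character formula
\[ \chi(g) \;=\; \theta^G(g) \;=\; \frac{1}{|H|} \sum_{\substack{x\in G\\ x^{-1}gx\in H}} \theta(x^{-1}gx), \]
which forces $\chi(g)=0$ whenever no $G$-conjugate of $g$ lies in $H$. Consequently the set $U := G\setminus \bigcup_{x\in G} H^x$ is a union of $G$-classes contained in the unique vanishing class $\mathcal{C}$ of $\chi$, so $U$ is either empty or a single class. The proof naturally splits along this dichotomy.

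If $U = \emptyset$, then the conjugates of $H$ cover $G$ and $\mathcal{C}$ must meet $H$. The cancellation required to produce a zero on $\mathcal{C}$ via the above formula is severe, and, combined with the maximality of $H$ and the absence of further vanishing classes, it should force $H$ to be a Frobenius complement of index $2$ in $G$. Checking that the kernel is then abelian of odd order yields case (i).

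If $U$ is a single $G$-class then $\mathcal{C} = U$ and every other class of $G$ meets $H$. Passing to $\bar G = G/N$, the image $\bar H$ is a proper core-free subgroup whose $\bar G$-conjugates cover $\bar G$ except for one class. I would next invoke the structure theory for such \emph{almost covering} subgroups: this configuration forces $\bar G$ to be either a Frobenius group with complement $\bar H$, or one of a short list of exceptional $2$-transitive groups. The Frobenius subcase produces (ii): the kernel $\bar M$ is elementary abelian of order $p^n$, and since $\bar H$ acts regularly on $\bar M\setminus\{1\}$ one gets $|\bar H|=p^n-1$ and a sharply $2$-transitive action. Pulling back through $N$ by Clifford theory, $M$ inherits the property that a suitable induction from $M\cap H$ vanishes on a single $M$-class, and a case split based on whether $M$ is Frobenius over $M'$, has a quaternion structure inducing $G/K\cong \SL_2(3)$, or is a Camina $p$-group, would deliver (a), (b), (c) respectively. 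The exceptional $2$-transitive subcases correspond to the actions of $\PSL_2(8){:}3$ on $9$ points with stabiliser $\mathrm{D}_{18}{:}3$ and of $\A_5$ on $5$ points with stabiliser $\mathrm{D}_{10}$, giving (iii) and (iv).

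The main obstacle is the internal analysis inside (ii). Once $\bar G$ is known to be sharply $2$-transitive Frobenius, the precise isomorphism type of $M$ above $N$ requires controlling how $\theta$ restricts to $M\cap H$ and then re-induces; the Camina subcase (c) is particularly delicate because it is invisible at the level of $\bar G$ and must be isolated by verifying the identity $[M,g]=M'$ for all $g\in M\setminus M'$ directly from the single-vanishing-class hypothesis. Similarly, proving the stated structure of $N$ in (iii) and (iv)—nilpotent $7'$-group and $2$-group, respectively—would rest on matching the prime divisors of $|N|$ against those allowed by the centralisers of lifts of elements in the vanishing class, through Hall-type arguments applied to the covering $G\to \bar G$.
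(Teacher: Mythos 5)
This statement is not proved in the paper at all: it is imported verbatim from Burness and Tong-Viet \cite[Theorem 1.6]{BT-V15}, so there is no internal proof to compare against. Judged on its own terms, your proposal does identify the correct entry point: the induction formula forces $\chi=\theta^G$ to vanish on every derangement of $G$ acting on $G/H$, so the set $U$ of derangements is a union of $G$-classes contained in the unique vanishing class, and this is exactly how the character-theoretic hypothesis is converted into a statement about primitive permutation groups whose derangements form a single conjugacy class.

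However, there are genuine defects beyond the sketchiness. First, the dichotomy is miscalibrated: by Jordan's theorem a transitive group of degree greater than one always contains a derangement, so $U\neq\emptyset$ whenever $H<G$. The branch ``$U=\emptyset$'' is therefore vacuous, and since that is the branch you assign to conclusion (i), your argument as structured can never produce case (i). That case in fact arises inside the ``$U$ is a single class'' branch, in the degenerate situation where $H$ is normal (so $N=H$, $G/H$ has prime order, and $U=G\setminus H$); your subsequent reduction to a core-free point stabiliser $\bar H$ in $\bar G=G/N$ with an ``almost covering'' conjugate union silently excludes precisely this situation. Relatedly, ``the cancellation required to produce a zero is severe and should force $H$ to be a Frobenius complement of index $2$'' is an assertion, not an argument. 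Second, and most seriously, the step ``invoke the structure theory for almost covering subgroups'' \emph{is} the theorem: classifying the primitive groups $\bar G$ whose derangements lie in a single class is the main result of \cite{BT-V15}, proved there using the classification of finite simple groups and fixed-point-ratio estimates, so deferring to it makes the proposal circular rather than a proof. Finally, the internal analysis of case (ii) (separating subcases (a)--(c)) and the determination of the structure of $N$ in (iii) and (iv) are only named as obstacles, not carried out. The skeleton is the right one, but essentially all of the mathematical work remains.
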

Using the result above, of the characters that were obtained in Propositions \ref{a5toa13} and \ref{overallclassificationLietype} we need to check primitivity of the characters in $ \PGL_{2}(8) $ and $ \A_{5} $ only. For $ \A_{5} $, $ |G{:}H|=6 $ and in Proposition \ref{a5toa13} we do not have characters of degree greater than or equal to $ 6 $. Also $ |(\PSL_{2}(8){:}3){:}(\rm{D}_{18}{:}3)|=14 $ but we have no characters of degree greater than or equal to $ 14 $ in Proposition \ref{overallclassificationLietype}. Hence the result follows:

\begin{theorem}\label{charactersareprimitive}
The characters in Propositions \ref{a5toa13} and \ref{overallclassificationLietype} are primitive. 
\end{theorem}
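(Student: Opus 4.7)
The plan is to argue by contradiction, using Theorem~\ref{imprimitive} as the main tool. Suppose some character $\chi$ listed in Proposition~\ref{a5toa13} or Proposition~\ref{overallclassificationLietype} is imprimitive. Then $\chi = \theta^{G}$ for some $\theta \in \Irr(H)$ with $H$ a proper subgroup of $G$, and by transitivity of induction we may assume $H$ is maximal in $G$. Since $\chi$ is assumed to vanish on exactly one conjugacy class, the hypotheses of Theorem~\ref{imprimitive} are satisfied, and hence $(G,H)$ must fall into one of the four listed configurations (i)--(iv).

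The first step is to discard cases (i) and (ii) of Theorem~\ref{imprimitive}: in (i) the group $G$ is a Frobenius group and hence solvable, while in (ii) $G$ is an extension of a solvable $p$-group by a Frobenius complement and is also solvable. Every group appearing in Propositions~\ref{a5toa13} and \ref{overallclassificationLietype} has a non-abelian composition factor, so neither (i) nor (ii) can occur, and only cases (iii) and (iv) remain to consider.

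The remaining two cases are eliminated by comparing character degrees to the indices $[G:H]$, since $\chi(1) = \theta^{G}(1) = [G:H]\cdot\theta(1) \geq [G:H]$. In case (iv), $G/N \cong \A_{5}$ and $H/N \cong D_{10}$, so $[G:H] = 6$; however, Proposition~\ref{a5toa13} contains no character of degree $\geq 6$ among the relevant groups involving $\A_{5}$. In case (iii), $G/N \cong \PSL_{2}(8){:}3$ with $H/N \cong D_{18}{:}3$, giving $[G:H] = 14$, yet Proposition~\ref{overallclassificationLietype} exhibits no character of degree $\geq 14$. Both cases therefore lead to a contradiction, so $\chi$ must be primitive.

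The main obstacle is essentially bookkeeping: one must carefully read off the character degrees appearing in the two propositions and verify that none reaches the threshold $[G:H]$ forced by cases (iii) and (iv). A minor subtlety is that Theorem~\ref{imprimitive} describes the structure of $G/N$ rather than $G$ itself, but since $H_G = N \leq H$ and the degree bound $\chi(1) \geq [G:H] = [G/N : H/N]$ transfers directly, no additional reduction is needed.
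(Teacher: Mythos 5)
Your argument is essentially the paper's own proof: assume $\chi=\theta^G$ with $H$ maximal, invoke Theorem~\ref{imprimitive}, and kill the surviving cases (iii) and (iv) by comparing $\chi(1)\geq |G:H|$ with the degrees listed in Propositions~\ref{a5toa13} and \ref{overallclassificationLietype}. That part is fine (and, incidentally, you reproduce the paper's value $|G:H|=14$ in case (iii), whereas the index of $\mathrm{D}_{18}{:}3$ in $\PSL_2(8){:}3$ is $1512/54=28$; this is harmless either way, since the only relevant degree is $7$).

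One point of your justification is not correct as stated: in case (ii) of Theorem~\ref{imprimitive} the group $G$ need not be solvable, because the Frobenius complement $H/N$ of order $p^n-1$ in (ii)(c) can be non-solvable (it may involve $\SL_2(5)$ — the paper itself accounts for exactly this possibility in the proof of Corollary~\ref{Q1answer}). So "cases (i) and (ii) force $G$ solvable" is a false premise. The exclusion of (i) and (ii) is nevertheless valid for the groups at hand, for a different reason: each group in Propositions~\ref{a5toa13} and \ref{overallclassificationLietype} is almost simple, quasisimple, or a central/outer extension thereof, and none possesses the normal structure required in (i) or (ii) (an abelian normal subgroup of index $2$, respectively a normal subgroup $M$ with $M/N$ elementary abelian of order $p^n$ and $G/M$ of order $p^n-1$). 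With that repair the proof is complete and coincides with the paper's.
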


\subsection{Symmetric and alternating groups}

\begin{proposition}\label{a5toa13}
Let $ G $ be a finite group with a composition factor isomorphic to $ \A_{n} $, $ n\geq 5 $. Then $ G $ has a faithful irreducible character vanishing on one conjugacy class if and only if $ G $ is one of the following:
\begin{itemize}
\item[(1)] $ G = \A_{5}  $, $ \chi _{2}(1)=\chi _{3}(1)=3 $ or $ \chi _{4}(1)=4 $;
\item[(2)] $ G={2\cdot} \A_{5} $, $ \chi _{6}(1)=\chi _{7}(1)=2 $ or $ \chi _{8}(1)=4 $;
\item[(3)] $ G=\SSS_{5} $, $ \chi _{6}(1)=\chi _{7}(1)=5 $;
\item[(4)] $ G\in \{\A_{6}{:}2_{2},~ \A_{6}{:}2_{3},~ 3{\cdot} \A_{6}{:}2_{3}\} $, $ \chi(1)=9 $ for all such $ \chi\in \Irr (G) $.
\end{itemize}
\end{proposition}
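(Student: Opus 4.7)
The plan is to prove each direction of the equivalence separately. For the backward direction, each listed pair $(G,\chi)$ is verified directly from the character tables in the Atlas \cite{CCNPW85}: one confirms that $\chi$ is faithful and irreducible and identifies its unique vanishing class.

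For the forward direction, let $G$ have a composition factor isomorphic to some $\A_n$ with $n\geq 5$, and let $\chi\in\Irr(G)$ be faithful and vanish on exactly one conjugacy class; in particular $G$ is non-solvable. I split according to whether $\chi$ is primitive or imprimitive.

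Suppose first that $\chi$ is primitive. With $K=\kernel\chi=1$, Theorem \ref{firsttheorem} applies, and its Frobenius conclusion (b) is ruled out because it requires $K$ to be non-solvable; hence case (a) holds, so $G/Z$ is almost simple with socle $M/Z$ and $M$ quasisimple. An almost simple group has its socle as its unique non-abelian composition factor, so $M/Z(M)\cong\A_n$. Theorem \ref{classificationp-powerorder} then restricts $M$ to those groups in its list whose central quotient is alternating, namely
\[
M\in\{\A_5,\ \SL_2(5),\ \A_6,\ 3{\cdot}\A_6\},
\]
using $\A_5\cong\PSL_2(5)$, $2{\cdot}\A_5\cong\SL_2(5)$, and $\A_6\cong\PSL_2(9)$. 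For each such $M$, the possibilities for the almost simple group $G$ with socle $M/Z(M)$ are enumerated using $\Aut(\A_5)\cong\SSS_5$ and $\Aut(\A_6)\cong P\Gamma L_2(9)$, and for each candidate we consult the Atlas to single out the faithful irreducible characters whose zero set consists of exactly one conjugacy class. The key class-fusion phenomenon occurs for $\A_6$: the outer automorphisms of type $2_2$ and $2_3$ interchange the two classes of order-$3$ elements of $\A_6$, so the degree-$9$ character (which vanishes on both of these classes and hence on two classes in $\A_6$ and in $\SSS_6$) vanishes on a single fused class in $\A_6{:}2_2$, $\A_6{:}2_3$, and $3{\cdot}\A_6{:}2_3$. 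Assembling the resulting list reproduces exactly the groups and characters in the statement.

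Suppose now that $\chi$ is imprimitive, and apply Theorem \ref{imprimitive}. Cases (i)--(iii) cannot produce an alternating composition factor: case (i) is solvable; in case (ii) the quotient $G/N$ is a $2$-transitive Frobenius group (hence solvable) and each sub-case forces $N$ to be solvable (a Frobenius kernel in (a)--(b), a $p$-group in (c)), so $G$ has no non-abelian composition factor; case (iii) produces the non-alternating factor $\PSL_2(8)$. Only case (iv) remains, giving $G/N\cong\A_5$ with $N$ a $2$-group and $H/N\cong \mathrm{D}_{10}$, so $[G:H]=6$ and $\chi(1)=6\theta(1)\geq 6$ for some $\theta\in\Irr(H)$. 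However, every character in our list whose group has $\A_5$ as composition factor has degree at most $5$, and groups with $\A_6$ as composition factor do not fall under case (iv); hence no imprimitive example arises. The principal technical effort lies in the Atlas case-check of the primitive branch, especially in tracking class fusion under the outer automorphisms for the extensions and the triple cover of $\A_6$.
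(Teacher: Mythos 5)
Your primitive branch is essentially the paper's own proof: apply the reduction theorem to get $G/Z$ almost simple with $M$ quasisimple, invoke Theorems \ref{a5} and \ref{2a5} (equivalently Theorem \ref{classificationp-powerorder}) to force $M\in\{\A_5,\ 2{\cdot}\A_5,\ \A_6,\ 3{\cdot}\A_6\}$, and finish by inspecting the finitely many almost simple extensions in the Atlas/GAP; the fusion of the two order-$3$ classes of $\A_6$ under the $2_2$ and $2_3$ automorphisms is exactly the phenomenon producing item (4). One small slip: the clause ``$K$ non-solvable'' that you use to eliminate the Frobenius alternative appears in Theorem \ref{generalreduction}(b), not in Theorem \ref{firsttheorem}(b) as you cite it; since these are the same result this is cosmetic.

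The genuine problem lies in your imprimitive branch --- which, to be fair, the paper's own proof of this proposition does not attempt at all (it only verifies afterwards, in Theorem \ref{charactersareprimitive}, that the characters it has found are primitive). Your dismissal of case (ii) of Theorem \ref{imprimitive} is incorrect: in sub-case (ii)(c) the complement $H/N$ is a Frobenius complement of order $p^n-1$, and Frobenius complements need not be solvable --- they may involve $\SL_2(5)$ and hence contribute $\A_5$ as a composition factor. The paper itself relies on exactly this possibility in its proof of Corollary \ref{Q1answer} (``if $H/N$ is a non-solvable complement, then it has only one non-abelian composition factor''). So case (ii)(c) cannot be discarded on the grounds that $G$ is solvable; you would need to show that no such configuration carries a \emph{faithful} induced character vanishing on a single class, or that any that does already appears in the list. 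Likewise, in case (iv) your observation that $\chi(1)=6\,\theta(1)\geq 6$ while the listed characters of $\A_5$-groups have degree at most $5$ only shows that a hypothetical example would not coincide with a listed one; for the ``only if'' direction you must actually rule out the existence of a group $G$ with $G/N\cong\A_5$, $N$ a nontrivial $2$-group, admitting a faithful irreducible $\theta^{G}$ with a unique vanishing class. Both points require a real argument (or an explicit appeal to the realizability analysis in \cite{BT-V15}) rather than the degree comparison you give.
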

\begin{proof}
By Theorem \ref{generalreduction} we have that there exist normal subgroups $ M $ and $ Z $ such that $ G/Z $ is almost simple and $ M $ is quasisimple with $ \chi _{M} $ irreducible and $ \chi _{M} $ vanishing on $ \mathcal{C}_{1}, \mathcal{C}_{2},\ldots, \mathcal{C}_{m} $ with $ m\geqslant 1 $ such that $ \mathcal{C}=\bigcup _{i=1}^{m}\mathcal{C}_{i} $. By the argument preceding Problem \ref{Q1} in the introduction, it is sufficient to only consider groups $ G $ such that $ M $ is isomorphic to the ones in the statements of Theorem \ref{a5} and Theorem \ref{2a5}. This means that $ M $ is isomorphic to $ \A_{5} $, $ \A_{6} $, $ 2{\cdot} \A_{5} $ or $ 3{\cdot} \A_{6} $. Using \textsl{GAP} \cite{GAP16} or Atlas \cite{CCNPW85} the result follows. Lastly by Theorem \ref{charactersareprimitive}, all the characters in the statement of the proposition are primitive.
\end{proof}

\subsection{Almost simple groups of Lie type}
We note that $ \PGL_{2}(q)=\PSL_{2}(q)\rtimes \langle \delta\rangle $ where $ \delta $ is a diagonal automorphism and $ |\langle \delta\rangle|=2 $. Also, $ \Aut(\PSL_{2}(q))=\PGL_{2}(q)\rtimes \langle \varphi \rangle $, where $ \varphi $ is a field automorphism and $ |\langle \varphi \rangle|=f $, where $ f $ is a positive integer.
\begin{proposition}\label{overallclassificationLietype}
Let $ G $ be a finite group with a composition factor isomorphic to $ \PSL_{2}(q) $, where $ q\geq 4 $ is a prime power. Then $ G $ has a faithful irreducible character vanishing on one conjugacy class if and only if $ G $ is one of the following:
\begin{itemize}
\item[(a)] $ G= \PSL_{2}(7) $, $ \chi (1)= 3 $;
\item[(b)] $ G= \PSL_{2}(8){:}3 $, $ \chi (1)=7 $;
\item[(c)] $ G= \PGL_{2}(q) $, $ \chi (1)=q $.
\end{itemize}
\end{proposition}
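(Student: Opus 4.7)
The plan is to combine the reduction in Theorem~\ref{generalreduction} with the quasisimple classification from Proposition~\ref{L2p-power}, and then, for each quasisimple $M$ it produces, decide which almost simple overgroups $G$ preserve the property of having a single vanishing class.

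Suppose $\chi \in \Irr(G)$ is faithful, primitive, and vanishes on the unique class $\mathcal{C}$. Since $\kernel\chi = 1$, Theorem~\ref{generalreduction}(a) applies with $Z = Z(G)$: the quotient $G/Z$ is almost simple and there is a quasisimple $M \lhd G$ with $M/Z$ the socle. Because $\PSL_2(q)$ is the non-abelian composition factor of $G$, one has $M/Z(M) \cong \PSL_2(q)$. The discussion after Theorem~\ref{firsttheorem} shows $\chi_M \in \Irr(M)$, that $\mathcal{C}$ is a union of at most $|G{:}M| \leq |\Out(\PSL_2(q))|$ classes of $M$ consisting entirely of elements of the same $p$-power order, and Lemma~\ref{uniqueminimal} gives that $Z$ is cyclic of $p$-power order when nontrivial. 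These are exactly the hypotheses of Proposition~\ref{L2p-power} for $q \geq 7$, $q \neq 9$; the remaining cases $\PSL_2(4) \cong \PSL_2(5) \cong \A_5$ and $\PSL_2(9) \cong \A_6$ reduce to Proposition~\ref{a5toa13}. Hence $M$ lies in the short list: $\PSL_2(7)$ with $\chi(1)=3$, $\PSL_2(8)$ with $\chi(1)=7$, $\PSL_2(11)$ with $\chi(1) \in \{5,10\}$, or $\PSL_2(q)$ with $\chi$ the Steinberg character of degree $q$.

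For each candidate $(M,\chi_M)$, I would enumerate the almost simple groups $G$ with $G/Z$ lying between $M/Z$ and $\Aut(\PSL_2(q)) = \PGL_2(q) \rtimes \langle\varphi\rangle$, then verify (i) $\chi_M$ extends to $G$, and (ii) the vanishing classes of $\chi_M$ in $M$ fuse to a single $G$-class. The three small families ($q \in \{7,8,11\}$) are finite ATLAS \cite{CCNPW85} checks: in $\PSL_2(7)$ the two degree-$3$ characters are complex conjugate and swapped by $\Out \cong C_2$, so each only yields $G = \PSL_2(7)$, giving case (a); in $\PSL_2(8)$ the three degree-$7$ characters are cyclically permuted by the field automorphism of order $3$, and the three order-$9$ vanishing classes fuse to one in $G = \PSL_2(8){:}3$, giving case (b); the degree-$5$ and degree-$10$ characters of $\PSL_2(11)$ are swapped by $\Out$ and their pair of vanishing classes remains distinct in $\PSL_2(11)$ itself, so no new $G$ survives.

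The main obstacle is the uniform Steinberg case $\mathrm{St}_q$ of $\PSL_2(q)$. Since $\mathrm{St}_q$ is fixed by every automorphism of $\PSL_2(q)$, condition (i) is automatic, and only the fusion needs attention. For $q$ even, $\gcd(2,q-1)=1$ gives $\PGL_2(q)=\PSL_2(q)$ and $\mathrm{St}_q$ vanishes on the unique class of elements of order $p$, yielding case (c) with $G=\PSL_2(q)$. For $q$ odd, $\mathrm{St}_q$ vanishes on the two unipotent $\PSL_2(q)$-classes $c$ and $d$, and the diagonal outer automorphism generating $\PGL_2(q)/\PSL_2(q)$ interchanges them, so they fuse into a single class in $G = \PGL_2(q)$; any further field-automorphism extension stabilises $c$ and $d$ individually and hence does not reduce the count, while $\PSL_2(q)$ itself retains two vanishing classes. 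Thus $\PGL_2(q)$ is the unique almost simple overgroup realising (c). The forward implication (that each listed $G$ really has the claimed character) follows from these same extension and fusion statements, and primitivity of all characters appearing in (a)--(c) is provided by Theorem~\ref{charactersareprimitive}.
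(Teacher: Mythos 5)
Your reduction to Proposition~\ref{L2p-power} via Theorem~\ref{generalreduction}, and your treatment of the small cases $q\in\{7,8,11\}$ and of $\PSL_2(4)\cong\PSL_2(5)$, $\PSL_2(9)$, match the paper. But your verification scheme --- check that $\chi_M$ extends and that the vanishing classes of $\chi_M$ \emph{inside $M$} fuse to a single $G$-class --- is not sufficient, and this creates a genuine gap in the Steinberg case. An extension of $\chi_M$ to $G$ may acquire \emph{new} zeros on $G\setminus M$, so (i) and (ii) neither certify that a listed group works nor rule out larger groups. Concretely, your argument does not exclude any $G$ with $\PGL_2(q)<G\leqslant\Aut(\PSL_2(q))$: in such a $G$ the unipotent elements still form a single class (the diagonal automorphism has already fused $c$ and $d$, and field automorphisms do not undo this), so your fusion criterion is satisfied and your conclusion that ``$\PGL_2(q)$ is the unique almost simple overgroup realising (c)'' does not follow from what you wrote. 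The paper closes exactly this case by a different mechanism: since $\gcd(|G{:}\PSL_2(q)|,q)=1$, the Steinberg character has $p$-defect zero in $G$, and for $G$ strictly containing $\PGL_2(q)$ one can find a field-automorphism element $x$ of prime order $r\nmid q$ centralising the unipotent element $c$; then $cx$ has order $pr$ and $\phi(cx)=0$, producing a second vanishing class. This ``new zeros of mixed order'' argument is the essential missing idea.

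A second, smaller omission of the same kind affects the forward direction: to confirm that $\PGL_2(q)$ ($q$ odd) really has only one vanishing class, you must also check that the extended Steinberg character does not vanish anywhere on $\PGL_2(q)\setminus\PSL_2(q)$; the paper does this by citing Steinberg's computation that its values there are $\pm1$. Your criterion, which only tracks the zeros of $\chi_M$, silently assumes this. The rest of your outline (ATLAS checks for $q\in\{7,8,11\}$, the even-$q$ case where $\PGL_2(q)=\PSL_2(q)$, and primitivity via Theorem~\ref{charactersareprimitive}) is in line with the paper.
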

\begin{proof} By Theorem \ref{generalreduction}, we have that there exist normal subgroups $ M $ and $ Z $ such that $ G/Z $ is almost simple and $ M $ is quasisimple. By the argument in the proof of Proposition \ref{a5toa13}, $ M $ is isomorphic to one of the groups listed in Proposition \ref{L2p-power}. Suppose $ M\cong \PSL_{2}(7) $, $ \PSL_{2}(8) $ or $ \PSL_{2}(11) $. Using Atlas \cite{CCNPW85}, (a) and (b) follow.

We now consider the case in Proposition \ref{L2p-power}(4). When $ q $ is even, $ G=M=\PSL_{2}(q)=\PGL_{2}(q) $ and (c) follows from the proof of Theorem \ref{L2p-power}(4). Now suppose $ q $ is odd and let $ M=\PSL_{2}(q) $ and $ G=\PSL_{2}(q) $. Note that for $ \PGL_{2}(q) $ the Steinberg character $ \phi $ has values $ \pm 1 $ for all elements outside $ \PSL_{2}(q) $ by \cite[Section 2]{Ste51}. Also $ \PGL_{2}(q) $ has one conjugacy class of order $ p $. That the Steinberg character extends to $ \PGL_{2}(q) $ follows from \cite{Fei93}. Hence $ G=\PGL _{2}(q) $ satisfies the hypothesis as required. 

Now suppose that $ M=\PSL_{2}(q) < G \leqslant \Aut(\PSL_{2}(q)) $ and $ G\ncong \PGL_{2}(q) $. We want to show that every $ \chi $ of $ G $ vanishes on at least two conjugacy classes of $ G $. In light of Proposition \ref{L2p-power}(4), we need only consider the Steinberg character $ \phi $ of $ \PSL_{2}(q) $. By \cite[Lemma 2.2]{Qia07} we have $ \gcd(|G{:}\PSL_{2}(q)|, q)=1 $. Hence $ \phi $ is $ p $-defect zero in $ G $. We show that $ \PGL _{2}(q)\leqslant H $. By \cite[Lemma 3.1]{Whi13} the action of $ \delta $ makes the conjugacy classes represented by $ c $ and $ d $ of $ \PSL _{2}(q) $ into one conjugacy class. On the other hand, by \cite[Lemma 3.2]{Whi13} for $ 1\leq k < f $, $ \varphi ^{k} $ fixes these conjugacy classes, so $ G $ has two conjugacy classes of elements of order $ p $. Therefore $ G $ necessarily contains $ \delta $ and hence $ \PGL_{2}(q) $ and has only one conjugacy class of order $ p $, $ \mathcal{C} $ say. Thus $ G=G\cap \PGL_{2}(q)\rtimes \langle \varphi \rangle $. Now $ |\textbf{C}_{\PGL_{2}(q)}(c)|=\dfrac{|\PGL_{2}(q)|}{|\mathcal{C}|} $. This means that $ |\textbf{C}_{G}(c)|=\dfrac{|G|}{|\mathcal{C}|}=\dfrac{|G{:}\PGL_{2}(q)||\PGL_{2}(q)|}{|\mathcal{C}|} $. Since $ \gcd(|G{:}\PGL_{2}(q)|, q)=\gcd(|\langle \varphi \rangle| , q)= 1 $, there must exist $ x\in G\setminus \PGL_{2}(q)$ of order $ r\nmid q $, where $ r $ is a prime. Note that $ x\in \textbf{C}_{G}(c) $. Then $ cx $ has order $ pr $. Since $ \phi $ is $ p $-defect zero in $ G $, $ \phi $ vanishes on $ cx $. Since $ \phi $ vanishes on $ c $, we have that $ \phi $ vanishes on two distinct classes of $ G $ as required.

By Theorem \ref{charactersareprimitive}, all the characters in the statement of the proposition are primitive
\end{proof}

\section{A question of Dixon and Rahnamai Barghi}
We restate and prove Corollary \ref{Q1answer}.
\begin{corollary}
Let $ G $ be a finite non-solvable group. If $ \chi $ is a faithful irreducible character of $ G $ that vanishes on one conjugacy class, then $ G $ has only one non-abelian composition factor.
\end{corollary}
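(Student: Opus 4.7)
My plan is to split the argument by whether $\chi$ is primitive and to dispatch each case using the structural theorems already established. Throughout I assume $G$ is non-solvable (otherwise there is nothing to prove); note also that $\chi$ is nonlinear (linear characters never vanish) and that $K := \kernel \chi = 1$ by faithfulness.

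In the primitive case I would apply Theorem \ref{generalreduction} directly. Since $K = 1$ is solvable, case (b) of that theorem is excluded, so case (a) must hold: $G/Z$ is almost simple with socle $M/Z$ and $M$ is quasisimple. The composition factors of $G$ then come from three layers: the central cyclic subgroup $Z$, which contributes only abelian factors; the unique non-abelian simple quotient $M/Z$; and the quotient $G/M$, which embeds into $\Out(M/Z)$. By Schreier's conjecture (a theorem via the CFSG) the outer automorphism group of any finite simple group is solvable, so $G/M$ contributes only abelian composition factors. Hence $G$ has exactly one non-abelian composition factor.

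In the imprimitive case I would use transitivity of induction to write $\chi = \theta^{G}$ with $\theta \in \Irr(H)$ for some maximal subgroup $H < G$ and apply Theorem \ref{imprimitive}. Subcase (i) describes a solvable $G$ and is discarded. Subcase (ii) also cannot arise: in each of (a), (b), (c) the normal subgroup $M$ is solvable (respectively a Frobenius group with nilpotent kernel, a solvable extension of $\rm{Q}_{8}$, or a Camina $p$-group), while $G/N$ is a $2$-transitive Frobenius group with cyclic complement and elementary abelian kernel, hence solvable; together these force $G$ solvable, contradicting the standing hypothesis. In the remaining subcases (iii) and (iv) one has $G/N \cong \PSL_{2}(8){:}3$ or $\A_{5}$ with $N$ nilpotent (a $7'$-group or a $2$-group respectively), so $N$ contributes only abelian composition factors while $G/N$ contributes exactly one non-abelian simple factor. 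This completes the argument.

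The only genuine bookkeeping is inside subcase (ii) of Theorem \ref{imprimitive}, where one must inspect each of (a), (b), (c) and verify solvability of $G$; this is routine once one uses that Frobenius kernels are nilpotent and that the relevant complements are cyclic or contained in $\SL_{2}(3)$. No substantial new obstacle appears: the heavy lifting is already done by Theorems \ref{generalreduction} and \ref{imprimitive}, and the corollary reduces to a bookkeeping verification across their case analyses together with the Schreier bound on outer automorphism groups of finite simple groups.
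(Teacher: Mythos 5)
Your primitive case is handled correctly and in essentially the same way as the paper: faithfulness forces $K=1$, which rules out case (b) of Theorem \ref{generalreduction} (there $K$ is required to be non-solvable), and in case (a) the only non-abelian composition factor is $M/Z$ since $Z$ is cyclic and $G/M$ embeds in the solvable group $\Out(M/Z)$. Your treatment of the imprimitive subcases (i), (ii)(a), (ii)(b), (iii) and (iv) is also fine.

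There is, however, a genuine error in your dispatch of subcase (ii)(c) of Theorem \ref{imprimitive}. You assert that the complement $H/N$ of the $2$-transitive Frobenius group $G/N$ is cyclic, and conclude that $G$ is solvable in all of case (ii). Theorem \ref{imprimitive} only says that $|H/N|=p^{n}-1$; the complement of a sharply $2$-transitive group is the multiplicative group of a near-field, and by Zassenhaus's classification there are exceptional near-fields (of orders $11^{2}$, $29^{2}$, $59^{2}$, among others) whose multiplicative groups contain $\SL_{2}(5)$ and are therefore non-solvable. So in case (ii)(c) the group $G$ can fail to be solvable, and your argument breaks down exactly at the point where the non-solvability of $G$ has to live somewhere. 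The conclusion of the corollary is still true here, but for a different reason, which is the one the paper uses: a non-solvable Frobenius complement has a normal subgroup of index at most $2$ isomorphic to $\SL_{2}(5)\times Z$ with $Z$ metacyclic of order coprime to $30$, hence has exactly one non-abelian composition factor (namely $\A_{5}$); combined with the nilpotence of the Camina $p$-group $M$ and of $N=M'$, this gives the claim. You need to replace your solvability assertion in (ii)(c) with this structural fact about Frobenius complements.
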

\begin{proof}
If $ \chi $ is primitive, then we consider the cases in Theorem \ref{generalreduction}. By the proof of Theorem \ref{generalreduction}, case (b) is when $ G $ is solvable. For case (b), the result follows since $ \Out(M/Z) $ is solvable.

 Suppose that $ \chi $ is imprimitive. By Theorem \ref{imprimitive}, the non-solvable cases are Theorem \ref{imprimitive}(ii)(c), (iii) and (iv). For (ii)(c) it is well known that if $ H/N $ is a non-solvable complement, then it has only one non-abelian composition factor. For (iii) and (iv) it is clear that $ G $ has only one non-abelian composition factor. Hence the result follows.
\end{proof}

Note that for the imprimitive case $ \chi $ need not be faithful.

\section*{Acknowledgements}
The author would like to thank Prof. Tong-Viet for the careful reading of this manuscript. His comments improved the presentation of the results. The author is also grateful to Prof. van den Berg and Dr. Le for their many helpful discussions during the preparation of this article. Additionally, the author thanks the referee for useful remarks and for suggesting a shorter proof of Proposition \ref{nonabeliancase}. The author acknowledges the support of DST-NRF Centre of Excellence in Mathematical and Statistical Sciences (CoE-MaSS). Opinions expressed and conclusions arrived at are those of the author and should not necessarily to be attributed to the CoE-MaSS.

\end{document}